\def\bsa{{\boldsymbol{a}}}
\def\bsb{{\boldsymbol{b}}}
\def\bsc{{\boldsymbol{c}}}
\def\bsu{{\boldsymbol{u}}}
\def\bsv{{\boldsymbol{v}}}
\def\bsx{{\boldsymbol{x}}}
\def\bsz{{\boldsymbol{z}}}
\def\bsF{{\boldsymbol{F}}}
\def\bsI{{\boldsymbol{I}}}
\def\bsL{{\boldsymbol{L}}}
\def\bsX{{\boldsymbol{X}}}
\def\calA{{\mathcal{A}}}
\def\calG{{\mathcal{G}}}
\def\calP{{\mathcal{P}}}
\definecolor{gray}{RGB}{128,128,128}
\newtheorem{theorem}{Theorem}
\newtheorem{assumption}{Assumption}
\newtheorem{lemma}{Lemma}
\newtheorem{definition}{Definition}
\newtheorem{remark}{Remark}
\newtheorem{example}{Example}
\DeclareMathOperator{\nullrank}{null}
\DeclareMathOperator{\rank}{rank}
\DeclareMathOperator{\col}{col}
\DeclareMathOperator{\diag}{diag}
\DeclareMathOperator{\Deg}{Deg}
\DeclareMathOperator{\image}{image}
\DeclareMathOperator*{\argmin}{arg\, min}
\def\R{\mathbb{R}}
\newenvironment{proof}[1][Proof]%
  {\smallskip\par\noindent\textbf{#1\,:\ }}%
  {\hspace*{\fill} \rule{6pt}{6pt}\smallskip}
\newenvironment{proof*}[1][Proof]%
  {\smallskip\par\noindent\textbf{#1\,:\ }}%
\newlength{\fwidth}\setlength{\fwidth}{0.485\textwidth}%
\begin{document}
\IEEEoverridecommandlockouts
\title{Exponential Convergence for Distributed Smooth Optimization\\ Under the Restricted Secant Inequality Condition}
\author{Xinlei Yi, Shengjun Zhang, Tao Yang, Karl H. Johansson, and Tianyou Chai% <-this % stops a space
\thanks{This work was supported by the
Knut and Alice Wallenberg Foundation, the  Swedish Foundation for Strategic Research, the Swedish Research Council.}% <-this % stops a space
\thanks{X. Yi and K. H. Johansson are with the Division of Decision and Control Systems, School of Electrical Engineering and Computer Science, KTH Royal Institute of Technology, 100 44, Stockholm, Sweden. {\tt\small \{xinleiy, kallej\}@kth.se}.}%
\thanks{S. Zhang is with the Department of Electrical Engineering, University of North Texas, Denton, TX 76203 USA. {\tt\small  ShengjunZhang@my.unt.edu}.}
\thanks{T. Yang and T. Chai are with the State Key Laboratory of Synthetical Automation for Process Industries, Northeastern University, 110819, Shenyang, China. {\tt\small \{yangtao,tychai\}@mail.neu.edu.cn}.}
}

\maketitle

\begin{abstract}
This paper considers the distributed smooth optimization problem in which the objective is to minimize a global cost function formed by a sum of local smooth cost functions,  by using local information exchange. The standard assumption for proving exponential/linear convergence of first-order methods is the strong convexity of the cost functions, which does not hold for many practical applications.  In this paper, we first show that the continuous-time distributed  primal-dual gradient algorithm converges to one global minimizer exponentially  under the assumption that the global cost function satisfies the restricted secant inequality condition. This condition is weaker than the strong convexity condition since it does not require convexity and the global minimizers are not necessary to be unique. We then show that the discrete-time distributed primal-dual algorithm constructed by using the Euler's approximation method converges to one global minimizer linearly under the same condition. The theoretical results are illustrated by numerical simulations.
%In this paper, we consider the distributed optimization problem, whose objective is to minimize a global cost function formed by a sum of local cost functions,  by using local information exchange. Here the cost functions are assumed to be smooth but not necessary to be convex, and the global minimizers are not necessary to be unique. We first prove that the distributed continuous-time primal-dual gradient algorithm converges to one global minimizer exponentially  under the assumption that the global cost function satisfies the restricted secant inequality condition which is weaker than the standard strong convexity assumption. We then show that the distributed discrete-time algorithm constructed by using Euler's approximation method converges to one global minimizer linearly under the same condition. The theoretical results are illustrated by numerical simulations.
\end{abstract}

\section{Introduction}
The distributed optimization problem has a long history which can be traced back to \cite{tsitsiklis, Tsitsiklis_TAC86,bertsekas1989parallel}.
Such a problem has gained renewed interests in recent years due to its wide applications on power system, machine learning, and sensor network, just to name a few \cite{sayed2014adaptation,nedich2015convergence}.

When the cost functions are convex, various distributed optimization algorithms have been developed for solving this problem and can be divided into two categories depending on whether the algorithm is discrete-time or continuous-time.
Most existing distributed optimization algorithms are discrete-time %and among these algorithms, the first-order algorithms based on the consensus theory and the (sub)gradient method have received much attention since they are simple, amenable to implementation and easily generalizable. The existing distributed first-order discrete-time algorithms can be categorized into two classes depending on whether the stepsizes are diminishing or fixed.
and are based on the consensus and distributed (sub)gradient descent method \cite{Kalle-CDC08,Nedic09,Minghui-TAC12,Rabbat-PushSum-CDC2012,Nedic15, Yang-EDP-Delay}.
Although the distributed (sub)gradient descent algorithms can deal with non-smooth convex functions and has been extended in several directions to handle more realistic scenarios, the convergence rate is at most sub-linear due to the diminishing stepsizes.
With a fixed stepsize, the distributed (sub)gradient descent algorithms converge fast, but only to a neighborhood of an optimal point \cite{Baras-constant-step-size,Yin-DGD}.
Recent studies focused on developing accelerated algorithms with fixed stepsizes by using some sort of historical information \cite{jakovetic2015linear,nedic2017achieving,nedic2017geometrically,qu2018harnessing,qu2017accelerated,xi2018add,xu2018convergence,xin2018linear,pu2018push,jakovetic2019unification,Damiano-TAC2016,saadatniaki2018optimization,shi2015extra,zeng2017extrapush,xi2017dextra,Yang-PESGM2018,LishaYao-ICCA2018,xin2019frost}.

Although most existing distributed optimization algorithms are discrete-time, with the development of cyber-physical systems, continuous-time algorithms have also been proposed, mainly because many practical systems such as robots and unmanned vehicles operate in continuous-time and the well-developed continuous-time control techniques (in particular Lyapunov stability theory) may facilitate the analysis.
The existing continuous-time distributed algorithms can be classified into two classes depending on whether the algorithm uses the first-order gradient information \cite{Elia_Allerton10, Cortes_TAC_CT,yu2016gradient, kia2015distributed,zhang2017distributed,li2018distributed,Yi2018distributed,liang2019exponential} or the second-order Hessian information \cite{Jie-ZGS,ErminWei-TAC1}.

Among these distributed optimization algorithms, the standard assumption for proving exponential/linear convergence are that each local cost function is smooth and (local or global) cost functions are strongly convex. For example, in \cite{xin2019frost,Jie-ZGS,yu2016gradient,kia2015distributed,zhang2017distributed,jakovetic2015linear,nedic2017achieving,nedic2017geometrically,qu2018harnessing,qu2017accelerated,xi2018add,xu2018convergence,xin2018linear,pu2018push,jakovetic2019unification}, the authors assumed that each local cost function is strongly convex and in \cite{Damiano-TAC2016,li2018distributed,saadatniaki2018optimization}, the authors assumed that the global cost function is strongly convex. Unfortunately, many practical applications, such as least squares and logistic regression, do not always have strongly convex cost functions \cite{yang2018distributed}. This situation has motivated researchers to consider alternatives to strong convexity. There are some results in centralized optimization. For instance, in \cite{necoara2019linear}, the authors derived linear convergence rates of several centralized first-order methods for solving the smooth convex constrained optimization problem under the quadratic function growth condition and in \cite{karimi2016linear}, the authors showed linear convergence rates of centralized proximal-gradient methods for solving the smooth (non-convex) optimization problem under the assumption that the cost function satisfies the Polyak-{\L}ojasiewicz condition. However, to the best of knowledge, there are few such kind of results in distributed optimization except \cite{shi2015extra,liang2019exponential}. In \cite{shi2015extra}, the authors proposed the distributed exact first-order algorithm (EXTRA) to solve smooth convex optimization and proved linear convergence rates under the condition that the global cost function is restricted strongly convex and the optimal set is a singleton. In \cite{liang2019exponential}, the authors established exponential/linear convergence of the distributed primal-dual gradient decent algorithm for solving smooth convex optimization under the condition that the primal-dual gradient map is metrically subregular which is weaker than strict or strong convexity.

In this paper, we consider the problem of solving distributed smooth optimization and analyse the convergence rate of the distributed primal-dual gradient decent algorithm. We first show that the continuous-time distributed  primal-dual gradient algorithm converges to one global minimizer exponentially  under the assumption that the global cost function satisfies the restricted secant inequality condition. This condition is weaker than the (restrict) strong convexity condition assumed in \cite{jakovetic2015linear,nedic2017achieving,nedic2017geometrically,qu2018harnessing,qu2017accelerated,xi2018add,xu2018convergence,xin2018linear,pu2018push,jakovetic2019unification,Damiano-TAC2016,saadatniaki2018optimization,zeng2017extrapush,xi2017dextra,shi2015extra,Jie-ZGS,yu2016gradient,kia2015distributed,zhang2017distributed,li2018distributed,Yi2018distributed} since it does not require convexity and the global minimizers are not necessarily to be unique and it is different from metric subregularity criterion assumed in \cite{liang2019exponential}. We then show that the discrete-time counterpart of the continuous-time distributed  primal-dual gradient algorithm, which is obtained from a simple discretization by Euler's method, also converges to one global minimizer linearly under the same condition.

The rest of this paper is organized as follows.
Section~\ref{sec-preliminary} introduces some preliminaries.
Section~\ref{sec-problem} gives problem formulation and assumptions.
The main results are stated in Sections~\ref{sec-main} and \ref{sec-main-dc}.
Simulations are given in Section~\ref{sec-simulation}.
Finally, concluding remarks are offered in Section~\ref{sec-conclusion}.

\noindent {\bf Notations}: $[n]$ denotes the set $\{1,\dots,n\}$ for any positive constant $n$. $\col(z_1,\dots,z_k)$ is the concatenated column vector of vectors $z_i\in\mathbb{R}^{p_i},~i\in[k]$. ${\bf 1}_n$ (${\bf 0}_n$) denotes the column one (zero) vector of dimension $n$. $\bsI_n$ is the $n$-dimensional identity matrix. Given a vector $[x_1,\dots,x_n]^\top\in\mathbb{R}^n$, $\diag([x_1,\dots,x_n])$ is a diagonal matrix with the $i$-th diagonal element being $x_i$. The notation $A\otimes B$ denotes the Kronecker product
of matrices $A$ and $B$. $\rank(A)$, $\image(A)$, and $\nullrank(A)$ are the rank, image, and null of matrix $A$, respectively.
Given two symmetric matrices $M,N$, $M\ge N$ means that $M-N$ is positive semi-definite. $\rho(\cdot)$ stands for the spectral radius for matrices and $\rho_2(\cdot)$ indicates the minimum
positive eigenvalue for matrices having positive eigenvalues. $\|\cdot\|$ represents the Euclidean norm for
vectors or the induced 2-norm for matrices. For given positive semi-definite matrix $A$, $\|x\|_A$ denotes the norm $\sqrt{x^\top Ax}$.  Given a differentiable function $g$, $\nabla g$ denotes the gradient of $g$. %$|S|$ is the cardinality of set $S$.

\section{Preliminaries}\label{sec-preliminary}
In this section, we present some definitions from algebraic graph theory \cite{mesbahi2010graph},  the restricted secant inequality \cite{zhang2015restricted}, and monotonicity properties of vector functions \cite{crouzeix2000conditions}.

\subsection{Algebraic Graph Theory}

Let $\mathcal G=(\mathcal V,\mathcal E, A)$ denote a weighted undirected graph with the set of vertices (nodes) $\mathcal V =[n]$, the set of links (edges) $\mathcal E
\subseteq \mathcal V \times \mathcal V$, and the weighted adjacency matrix
$A =A^{\top}=(a_{ij})$ with nonnegative elements $a_{ij}$. A link of $\mathcal G$ is denoted by $(i,j)\in \mathcal E$ if $a_{ij}>0$, i.e., if vertices $i$ and $j$ can communicate with each other. It is assumed that $a_{ii}=0$ for all $i\in [n]$. Let $\mathcal{N}_i=\{j\in [n]:~ a_{ij}>0\}$ and $\deg_i=\sum\limits_{j=1}^{n}a_{ij}$ denotes the neighbor set and weighted degree of vertex $i$, respectively. The degree matrix of graph $\mathcal G$ is $\Deg=\diag([\deg_1, \cdots, \deg_n])$. The Laplacian matrix is $L=(L_{ij})=\Deg-A$. A  path of length $k$ between vertices $i$ and $j$ is a subgraph with distinct vertices $i_0=i,\dots,i_k=j\in [n]$ and edges $(i_j,i_{j+1})\in\mathcal E,~j=0,\dots,k-1$.
An undirected graph is  connected if there exists at least one path between any two vertices.

\subsection{Restricted Secant Inequality}
\begin{definition} (Definitions 1 and 2 in \cite{zhang2015restricted})
A differentiable function $f(x):~\mathbb{R}^p\mapsto\mathbb{R}$ satisfies the restricted secant inequality condition with constant $\nu>0$ if
\begin{align}
&(\nabla f(x)-\nabla f(\calP_{X^*}(x))^\top(x-\calP_{X^*}(x))\nonumber\\
&\ge \nu\|x-\calP_{X^*}(x)\|^2,~\forall x\in \mathbb{R}^p,\label{cdc18so:equ:rsc}
\end{align}
where $X^*$ is the set of all  global minimizers of $f$ and $\calP_{X^*}(x)$ is the projection of $x$ onto the set $X^*$, i.e., $\calP_{X^*}(x)=\argmin_{y\in X^*}\|x-y\|^2$. If the function $f$ is also convex it is called restricted strong convexity.
\end{definition}
Note that, unlike the strong convexity, the restricted secant inequality \eqref{cdc18so:equ:rsc} alone does not even imply the convexity of $f$. Moreover, it does not imply that $X^*$ is a singleton either. However, it implies that every stationary point is a global minimizer, i.e., $X^*=\{x\in\mathbb{R}^p:\nabla f(x)={\bf 0}_p\}$. Therefore, it is weaker than (essential and weak) strong convexity \cite{karimi2016linear}. Example~\label{nonconvex:example} in the following gives a function which satisfies the restricted secant inequality condition but is not convex.
See \cite{zhang2015restricted,necoara2019linear} for more examples of functions that satisfy the restricted secant inequality condition.

\begin{example}\label{nonconvex:example}
(Example~2 in \cite{zhang2015restricted})
\begin{align*}
f(x)=\begin{cases}
         0, & x\le0, \\
         1-\sqrt{1-x^2}, & 0\le x<\frac{\sqrt{2}}{2}, \\
         g_{1}(x), & \frac{\sqrt{2}}{2}\le x<1,\\
         g_{2}(x), & x\ge1,
       \end{cases}
\end{align*}
where $g_{1}(x)=\sqrt{1-(x-\sqrt{2})^2}-\sqrt{2}+1$, $g_{2}(x)=\frac{1}{2}(x-1+\sqrt{\frac{\sqrt{2}-1}{2}})^2+\sqrt{2\sqrt{2}-2}+\frac{5-5\sqrt{2}}{4}$.
\end{example}

\subsection{Monotonicity}
\begin{definition} (See Section 2.2 in \cite{crouzeix2000conditions})
A mapping $F:\mathbb{K}\subseteq\mathbb{R}^p\rightarrow\mathbb{R}^p$ is said to be
\begin{enumerate}
  \item pseudomonotone on $\mathbb{K}$ if for all $a,b\in\mathbb{K}$,
  \begin{align*}
    (a-b)^\top F(b)\ge0\Rightarrow(a-b)^\top F(a)\ge0;
  \end{align*}
  \item pseudomonotone$^+_*$ on $\mathbb{K}$ if it is pseudomonotone on $\mathbb{K}$ and for all $a,b\in\mathbb{K}$,
  \begin{align*}
    &[(a-b)^\top F(b)=0~\text{and}~(a-b)^\top F(a)=0]\\
    &\Rightarrow F(a)=F(b).
  \end{align*}
\end{enumerate}
\end{definition}
The gradient of a differentiable pseudoconvex function is  pseudomonotone \cite{karamardian1976complementarity,penot1997generalized} and the gradient of a differentiable G-convex function is pseudomonotone$^+_*$ \cite{crouzeix2000conditions}.

\section{Problem Formulation and assumptions}\label{sec-problem}
Consider a network of $n$ agents, each of which has a local cost function $f_i: \mathbb{R}^{p}\rightarrow \mathbb{R}$.
All agents collaborate together to find an optimizer $x^*$ that minimizes the global objective $f(x)=\sum_{i=1}^nf_i(x)$, i.e.,
\begin{equation}\label{eqn:xopt}
 \min_{x\in {\R}^p} f(x).
\end{equation}
The communication among agents is described by an undirected weighted graph $\mathcal{G}$. Throughout this paper, we assume that the undirected graph $\mathcal G$ is connected.
With a slight abuse of notation, let $X^*=\{x^*\}$ denote the optimal set of the optimization problem \eqref{eqn:xopt}. For simplicity, let $\bsx=\col(x_1,\dots,x_n)$, $\tilde{f}(\bm{x})=\sum_{i=1}^{n}f_i(x_i)$, $\bsX^*=\{{\bf 1}_n \otimes x^*:~x^*\in X^*\}$, and $\bsL=L\otimes \bsI_p$. The following assumptions are made.

\begin{assumption}\label{cdc18so:ass:filc} %(Convex)
Each local cost function is differentiable. Moreover, the optimal set $X^*$ is nonempty and convex.
\end{assumption}

\begin{assumption}\label{cdc18so:ass:fiu}
Each  local cost function is smooth, that is, for each $i\in [n]$, $f_i$ has globally Lipschitz continuous gradient with constant $L_{f_i}>0$:
\begin{align*}
\|\nabla f_i(a)-\nabla f_i(b)\|\le L_{f_i}\|a-b\|,~\forall a,b\in \mathbb{R}^p.
\end{align*}
\end{assumption}

\begin{assumption}\label{cdc18so:ass:fil} The global cost function $f(x)$ satisfies the restricted secant inequality condition with constant $\nu>0$.
\end{assumption}

\begin{assumption}\label{cdc18so:ass:singleton} $\{\nabla\tilde{f}(\bsx):~\bsx\in\bsX^*\}$ is a singleton.
\end{assumption}

\begin{remark}
Compared with \cite{xin2019frost,Jie-ZGS,yu2016gradient,kia2015distributed,zhang2017distributed,li2018distributed,Yi2018distributed,liang2019exponential,jakovetic2015linear,nedic2017achieving,nedic2017geometrically,qu2018harnessing,qu2017accelerated,xi2018add,xu2018convergence,xin2018linear,pu2018push,jakovetic2019unification,Damiano-TAC2016,saadatniaki2018optimization,zeng2017extrapush,xi2017dextra,shi2015extra},
Assumptions~\ref{cdc18so:ass:filc}--\ref{cdc18so:ass:fiu} are mild since the convexity of the cost functions and the boundedness of their gradients are not assumed.
Assumption~\ref{cdc18so:ass:fil} only requires the global cost function rather than each local cost function  satisfies the restricted secant inequality condition. This is weaker than the assumptions used in \cite{xin2019frost,Jie-ZGS,yu2016gradient,kia2015distributed,zhang2017distributed,jakovetic2015linear,nedic2017achieving,nedic2017geometrically,qu2018harnessing,qu2017accelerated,xi2018add,xu2018convergence,xin2018linear,pu2018push,jakovetic2019unification} which assumed that each local cost function is strongly convex, and \cite{Damiano-TAC2016,li2018distributed,saadatniaki2018optimization} which assumed that the global cost function is strongly convex, and than \cite{zeng2017extrapush,xi2017dextra} which assumed that each local cost function is restricted strongly convex and the optimal set $X^*$ is a singleton, and \cite{shi2015extra,Yi2018distributed} which assumed that the global cost function is restricted strongly convex and $X^*$ is a singleton. One sufficient condition to guarantee that Assumption~\ref{cdc18so:ass:singleton} holds is that $X^*$ is a singleton. The following lemma gives another sufficient condition. Both sufficient conditions do not require the cost functions to be convex.
\end{remark}

From Proposition 14 in \cite{crouzeix2000conditions}, we have the following lemma.
\begin{lemma}\label{cdc18so:lemma:equigra}
Let $\mathbb{H}=\{{\bf 1}_n\otimes x:~x\in\mathbb{R}^{p}\}$.
Suppose that each local cost function is differentiable and $X^*$ is nonempty.  If $\nabla \tilde{f}$ is pseudomonotone$^+_*$ on $\mathbb{H}$, then $\{\nabla\tilde{f}(\bsx):~\bsx\in\bsX^*\}$ is a singleton.
\end{lemma}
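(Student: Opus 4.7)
The plan is to verify directly the defining implication of pseudomonotonicity$^+_*$ on a pair of points in $\bsX^*$, after checking that such pairs lie in $\mathbb{H}$ and satisfy the degenerate equality condition $(\bsx_1-\bsx_2)^\top\nabla\tilde f(\bsx_j)=0$. Concretely, I would pick arbitrary $\bsx_1,\bsx_2\in\bsX^*$, write them as $\bsx_j={\bf 1}_n\otimes x^*_j$ with $x^*_j\in X^*$, and observe that $\bsX^*\subseteq\mathbb{H}$, so both are admissible points for the hypothesis on $\nabla\tilde f$.

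Next I would exploit the fact that $x^*_1,x^*_2$ are unconstrained global minimizers of the differentiable function $f$, so by first-order optimality $\nabla f(x^*_j)=\sum_{i=1}^n \nabla f_i(x^*_j)=\bszero_p$ for $j=1,2$. A short Kronecker-product calculation then gives
\begin{align*}
(\bsx_1-\bsx_2)^\top\nabla\tilde f(\bsx_j)
&=\sum_{i=1}^n (x^*_1-x^*_2)^\top\nabla f_i(x^*_j)\\
&=(x^*_1-x^*_2)^\top\nabla f(x^*_j)=0,
\end{align*}
so both of the inner products appearing in the second clause of the definition of pseudomonotone$^+_*$ vanish simultaneously.

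With these two equalities in hand, I would invoke the assumed pseudomonotonicity$^+_*$ of $\nabla\tilde f$ on $\mathbb{H}$ to conclude $\nabla\tilde f(\bsx_1)=\nabla\tilde f(\bsx_2)$. Since $\bsx_1,\bsx_2\in\bsX^*$ were arbitrary, the set $\{\nabla\tilde f(\bsx):\bsx\in\bsX^*\}$ must be a singleton, which is the claim.

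There is no real obstacle here: the argument is essentially a direct unpacking of definitions once one recognises that the structured form $\bsx={\bf 1}_n\otimes x$ collapses the inner product $(\bsx_1-\bsx_2)^\top\nabla\tilde f(\bsx_j)$ to $(x^*_1-x^*_2)^\top\nabla f(x^*_j)$, and that global optimality annihilates $\nabla f$ at any $x^*_j\in X^*$. The only subtlety worth highlighting in the write-up is that neither convexity of $f$ nor uniqueness of $x^*$ is used, so the lemma genuinely yields an alternative, non-singleton-based sufficient condition for Assumption~\ref{cdc18so:ass:singleton}.
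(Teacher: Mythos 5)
Your proof is correct. The paper itself gives no argument for this lemma: it simply derives it as a special case of Proposition~14 in \cite{crouzeix2000conditions}, which is a general statement about pseudomonotone$^+_*$ maps being constant on the solution set of a variational inequality over $\mathbb{K}$. Your route is more elementary and self-contained: by restricting attention to pairs $\bsx_1,\bsx_2\in\bsX^*\subseteq\mathbb{H}$ and using unconstrained first-order optimality, $\nabla f(x^*_j)={\bf 0}_p$, you get the two degenerate equalities $(\bsx_1-\bsx_2)^\top\nabla\tilde f(\bsx_j)=0$ for free, so only the second clause of the definition of pseudomonotone$^+_*$ is ever invoked---the pseudomonotonicity clause itself plays no role. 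This makes visible exactly which part of the hypothesis does the work, at the cost of not recovering the more general variational-inequality statement that the cited proposition provides. The Kronecker collapse $(\bsx_1-\bsx_2)^\top\nabla\tilde f(\bsx_j)=(x^*_1-x^*_2)^\top\nabla f(x^*_j)$ and the inclusion $\bsX^*\subseteq\mathbb{H}$ are both verified correctly, so there is no gap.
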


\section{Continuous-time Distributed algorithm}\label{sec-main}
Noting that the Laplacian matrix $L$ is positive semi-definite and $\nullrank(L)=\{{\bf 1}_n\}$ since $\calG$ is connected, we know that
the optimization problem \eqref{eqn:xopt} is equivalent to the following constrained problem
\begin{mini}
{\bsx\in \mathbb{R}^{np}}{\tilde{f}(\bsx)}{\label{eqn:xoptcon}}{}
\addConstraint{\bsL^{1/2}\bsx=}{{\bf 0}_{np}.}{}
\end{mini}
Here, we use $\bsL^{1/2}\bsx={\bf 0}_{np}$ rather than $\bsL\bsx={\bf 0}_{np}$ as the constraint since it is also equivalent to $\bsx={\bf 1}_n\otimes x$ and it has a good property which will be shown in Remark~\ref{nonconvex:remark3}.

Let $\bsu=\col(u_1,\dots,u_n)\in\mathbb{R}^{np}$ denote the dual variable, then the augmented Lagrangian function associated with \eqref{eqn:xoptcon} is
\begin{align}\label{nonconvex:lagran}
\calA(\bsx,\bsu)=\tilde{f}(\bsx)+\frac{\alpha}{2}\bsx^\top\bsL\bsx+\beta\bsu^\top\bsL^{1/2}\bsx,
\end{align}
where $\alpha>0$ and $\beta>0$ are constants. Although $\tilde{f}(\bsx)$ does not satisfy the restricted secant inequality condition, the following lemma shows that $\tilde{f}(\bsx)+\frac{\alpha}{2}\bsx^\top\bsL\bsx$ satisfies the restricted secant inequality condition with respect with $\bsX^*$.
\begin{lemma}\label{cdc18so:prop}
Suppose that Assumptions~\ref{cdc18so:ass:filc}--\ref{cdc18so:ass:fil}. If $\alpha>\frac{2nL_{f}^2+\nu L_{f}}{\nu\rho_2(L)}$, where $L_{f}=\max_{i\in[n]}\{L_{f_i}\}$, then
   \begin{align}
     &(\nabla \tilde{f}(\bsx)-\nabla \tilde{f}(\calP_{\bsX^*}(\bsx)))^\top(\bsx-\calP_{\bsX^*}(\bsx))+\alpha\|\bsx\|^2_\bsL\nonumber\\
     &\ge\nu_1\|\bsx-\calP_{\bsX^*}(\bsx)\|^2,~\forall \bsx\in \mathbb{R}^{np},\label{rsc-eq}
   \end{align}
where $\nu_1=\min\{\frac{\nu}{2n},\alpha\rho_2(L)-\frac{2nL_{f}^2+\nu L_{f}}{\nu}\}>0$.
\end{lemma}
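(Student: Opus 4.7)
The plan is to decompose $\bsx$ into a consensus part and an orthogonal deviation, and then handle the two terms on the left-hand side separately. First, I would observe that because $\bsX^* = \{\bm 1_n\otimes x^*:x^*\in X^*\}$ lies entirely in the consensus subspace $\mathbb H$, the projection simplifies: writing $\bar x = \frac{1}{n}\sum_{i=1}^n x_i$ and $\bar{\bsx} = \bm 1_n\otimes\bar x$, the identity
\[
\|\bsx - \bm 1_n\otimes y\|^2 = \|\bsx-\bar{\bsx}\|^2 + n\|\bar x - y\|^2
\]
implies $\calP_{\bsX^*}(\bsx) = \bm 1_n\otimes\calP_{X^*}(\bar x)$. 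Denoting $x^* = \calP_{X^*}(\bar x)$, $\bsx^* = \calP_{\bsX^*}(\bsx)$, and $\bsdelta = \bsx-\bar{\bsx}$, this yields the key orthogonal decomposition
\[
\|\bsx-\bsx^*\|^2 = \|\bsdelta\|^2 + n\|\bar x-x^*\|^2.
\]
Because $\bsdelta\perp\ker\bsL$, the Laplacian term is handled instantly via $\alpha\|\bsx\|_\bsL^2 = \alpha\|\bsdelta\|_\bsL^2 \ge \alpha\rho_2(L)\|\bsdelta\|^2$.

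Next, for the gradient difference term, I would insert $\pm\nabla f_i(\bar x)$ and expand
\[
(\nabla\tilde f(\bsx)-\nabla\tilde f(\bsx^*))^\top(\bsx-\bsx^*) = \sum_{i=1}^n\bigl(\nabla f_i(\bar x+\delta_i)-\nabla f_i(x^*)\bigr)^\top(\delta_i + \bar x-x^*)
\]
into four pieces by expanding both factors around $\bar x$. The one clean piece is $\sum_i(\nabla f_i(\bar x)-\nabla f_i(x^*))^\top(\bar x-x^*) = (\nabla f(\bar x)-\nabla f(x^*))^\top(\bar x-x^*)$, which Assumption~3 (restricted secant inequality at $x^* = \calP_{X^*}(\bar x)$) bounds below by $\nu\|\bar x-x^*\|^2$. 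The other three pieces all involve a Lipschitz-controlled gradient difference paired with either $\delta_i$ or $\bar x-x^*$; for each, Assumption~2 together with Cauchy–Schwarz gives uniform bounds of the form $L_f\|\bsdelta\|^2$ or $L_f\sqrt n\,\|\bar x-x^*\|\,\|\bsdelta\|$ (the $\sqrt n$ coming from $\sum_i\|\delta_i\|\le\sqrt n\,\|\bsdelta\|$).

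Then I would apply Young's inequality to the cross term with parameter chosen so that exactly half of the $\nu\|\bar x-x^*\|^2$ reserve is consumed, namely
\[
2L_f\sqrt n\,\|\bar x-x^*\|\,\|\bsdelta\| \le \tfrac{\nu}{2}\|\bar x-x^*\|^2 + \tfrac{2nL_f^2}{\nu}\|\bsdelta\|^2.
\]
Combining the pieces produces the lower bound
\[
(\nabla\tilde f(\bsx)-\nabla\tilde f(\bsx^*))^\top(\bsx-\bsx^*) + \alpha\|\bsx\|_\bsL^2 \ge \tfrac{\nu}{2}\|\bar x-x^*\|^2 + \Bigl(\alpha\rho_2(L)-\tfrac{2nL_f^2+\nu L_f}{\nu}\Bigr)\|\bsdelta\|^2.
\]
Using the orthogonal decomposition for $\|\bsx-\bsx^*\|^2$, the coefficient of $\|\bar x-x^*\|^2$ must dominate $n\nu_1$ and the coefficient of $\|\bsdelta\|^2$ must dominate $\nu_1$, which yields exactly $\nu_1 = \min\{\nu/(2n),\ \alpha\rho_2(L)-(2nL_f^2+\nu L_f)/\nu\}$, with positivity guaranteed by the stated lower bound on $\alpha$.

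The main obstacle is bookkeeping: picking the Young's-inequality split so that the $\nu$-reserve from the restricted secant inequality exactly absorbs the $2L_f\sqrt n$ cross terms while leaving the precise constant $(2nL_f^2+\nu L_f)/\nu$ subtracted from $\alpha\rho_2(L)$. A secondary subtlety worth double-checking is that all four cross-term pieces really do aggregate into a single $2L_f\sqrt n\,\|\bar x-x^*\|\,\|\bsdelta\|$ plus a single $L_f\|\bsdelta\|^2$ — i.e., that the two $L_f\sqrt n$ contributions from the $\delta_i\cdot(\bar x-x^*)$-type expansions add rather than cancel. Everything else is routine estimation.
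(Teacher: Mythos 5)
Your proposal is correct and follows essentially the same route as the paper's own proof: the orthogonal split $\bsx=\bar{\bsx}+\bsdelta$ is exactly the paper's decomposition $\bsx=\bsa+\bsb$ with $\bsa\in\mathbb{H}$, $\bsb\in\mathbb{H}^\bot$, the four-piece expansion of the gradient inner product, the bound $-2L_f\sqrt{n}\,\|\bar x-x^*\|\|\bsdelta\|-L_f\|\bsdelta\|^2$ on the cross terms, the Young's-inequality split consuming half of the $\nu$-reserve, and the bound $\alpha\|\bsx\|_\bsL^2\ge\alpha\rho_2(L)\|\bsdelta\|^2$ all appear verbatim (up to the notational translation $\|\bsa-\bsx^*\|^2=n\|\bar x-x^*\|^2$). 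The secondary subtlety you flag does check out: both $\delta_i$-versus-$(\bar x-x^*)$ pieces contribute with the same sign, giving the factor $2$ in the cross term, exactly as in the paper.
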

\begin{proof}
See Appendix~\ref{cdc18so:propproof}.
\end{proof}

\begin{remark}\label{cdc18so:remarkalpha}
Lemma~\ref{cdc18so:prop}  extends Proposition~3.6 in \cite{shi2015extra} and plays an important role in the proof of the exponential convergence later.
The key difference between Lemma~\ref{cdc18so:prop} and Proposition~3.6 in \cite{shi2015extra} is that here we do not assume that $\tilde{f}$ is convex  and $X^*$ is a singleton. The requirement that $\alpha>\frac{2nL_{f}^2+\nu L_{f}}{\nu\rho_2(L)}$ is used to eliminate the effects of non-convexity of  $\tilde{f}$. Similar to the proof of Proposition~3.6 in \cite{shi2015extra}, we can show that if $\tilde{f}$ is convex, then this requirement can be relaxed by $\alpha>0$ and \eqref{rsc-eq} still holds with
$\nu_1=\min\{\frac{\nu}{n}-2L_{f}\iota,\frac{\alpha\rho_2(L)\iota^2}{1+\iota^2}\}>0$, where $\iota\in(0,\frac{\nu}{2nL_{f}})$. Due to the similarity, we omit the details here.
\end{remark}

Based on the primal-dual gradient method, a continuous-time distributed algorithm to solve \eqref{eqn:xoptcon} is proposed as follows:
\begin{subequations}\label{kiau-algo-compact}
\begin{align}
\dot{\bm{x}}(t)&=-\alpha\bsL\bm{x}(t)-\beta\bsL^{1/2}\bm{u}(t)-\nabla \tilde{f}(\bm{x}(t)),\\
\dot{\bm{u}}(t)&=\beta\bsL^{1/2}\bm{x}(t),~\forall \bsx(0)\in\mathbb{R}^{np},~\bsu(0)={\bf 0}_{np}.
\end{align}
\end{subequations}
Denote $\bsv=\col(v_1,\dots,v_n)=\bsL^{1/2}\bm{u}$, then the algorithm \eqref{kiau-algo-compact} can be rewritten as
\begin{subequations}\label{kia-algo-compact}
\begin{align}
\dot{\bm{x}}(t)&=-\alpha\bsL\bm{x}(t)-\beta\bm{v}(t)-\nabla \tilde{f}(\bm{x}(t)),\\
\dot{\bm{v}}(t)&=\beta\bsL\bm{x}(t),~\forall \bsx(0)\in\mathbb{R}^{np},~\bsv(0)={\bf 0}_{np},
\end{align}
\end{subequations}
or
\begin{subequations}\label{kia-algo}
\begin{align}
\dot{x}_i(t) =& -\alpha\sum_{j=1}^nL_{ij}x_j(t)-\beta v_i(t)-\nabla f_i(x_i(t)),\\
\dot{v}_i(t) =& \beta\sum_{j=1}^n L_{ij}x_j(t),~\forall x_i(0)\in\mathbb{R}^p, \ v_i(0)={\bf 0}_p.
\end{align}
\end{subequations}

We have the following result for the continuous-time distributed primal-dual gradient decent algorithm \eqref{kia-algo}.
\begin{theorem}\label{thm2}
Each agent $i\in [n]$ runs the distributed algorithm \eqref{kia-algo}. If Assumptions~\ref{cdc18so:ass:filc}--\ref{cdc18so:ass:singleton} hold, $\alpha>\frac{2nL_{f}^2+\nu L_{f}}{\nu\rho_2(L)}$, and $\beta>0$, then $\bsx(t)$ exponentially converges to $\bsX^*$  with a rate no less than $\frac{\epsilon_2}{2\epsilon_3}>0$, where $\epsilon_2=\min\{\frac{\beta}{2},\epsilon_1\nu_1\}>0$ and $\epsilon_3=\max\{\frac{\epsilon_1}{\rho_2(L)}+\frac{\alpha}{2\beta}+\frac{1}{2},\epsilon_1+\frac{1}{2}\}$, with $\epsilon_1=\max\{\frac{1}{\nu_1}(\frac{L_{f}^2}{2\beta}+\rho(L)\beta),\frac{\beta}{\alpha}\}$.
\end{theorem}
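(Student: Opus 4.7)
The plan is to construct a quadratic Lyapunov function in primal-dual error coordinates, differentiate it along the trajectories of \eqref{kia-algo-compact}, and invoke Lemma~\ref{cdc18so:prop} to produce a strict decay inequality $\dot V \leq -(\epsilon_2/\epsilon_3)V$. Gr\"onwall's inequality then delivers exponential decay of the distance from $\bsx(t)$ to $\bsX^*$ at half that rate.

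First, I would pin down the equilibria of \eqref{kia-algo-compact}. Setting $\dot{\bsv}=\bszero$ gives $\bsL\bsx^* = \bszero$, so $\bsx^* = \bsone_n\otimes x^*$; setting $\dot{\bsx}=\bszero$ gives $\beta\bsv^* = -\nabla\tilde{f}(\bsx^*)$. The initial condition $\bsv(0)=\bszero$ combined with $\dot{\bsv}\in\image(\bsL)$ forces the dual trajectory to stay in $\image(\bsL)$ for all $t$, so the equilibrium satisfies $(\bsone_n^\top\otimes\bsI_p)\nabla\tilde{f}(\bsx^*)=\bszero_p$, i.e.\ $\sum_i \nabla f_i(x^*)=\bszero_p$. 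Assumption~\ref{cdc18so:ass:fil} promotes this stationarity to global minimality, so $\bsx^*\in\bsX^*$, and Assumption~\ref{cdc18so:ass:singleton} then guarantees that $\bsv^* = -\nabla\tilde{f}(\bsx^*)/\beta$ is unique, independent of which $\bsx^*\in\bsX^*$ one picks.

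Next, for any fixed $\bsx^*\in\bsX^*$, I would try a Lyapunov function of the form
\[
V(\bsx,\bsv) = \epsilon_1\,\|\bsx - \calP_{\bsX^*}(\bsx)\|^2 + \tfrac{1}{2}\|\bsx - \bsx^*\|^2 + \tfrac{1}{2\beta^2}\|\bsv - \bsv^*\|^2 + \tfrac{\alpha}{2\beta^2}(\bsx - \bsx^*)^\top(\bsv - \bsv^*),
\]
or a close variant, designed so that the cross term cancels the indefinite coupling $\pm\beta(\bsx-\bsx^*)^\top(\bsv-\bsv^*)$ produced in $\dot V$. In error coordinates the dynamics read $\dot{\bsx} = -\alpha\bsL(\bsx-\bsx^*) - \beta(\bsv-\bsv^*) - (\nabla\tilde{f}(\bsx) - \nabla\tilde{f}(\bsx^*))$ and $\dot{\bsv} = \beta\bsL(\bsx - \bsx^*)$. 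Differentiating $V$, I would use the envelope identity $\tfrac{d}{dt}\|\bsx(t) - \calP_{\bsX^*}(\bsx(t))\|^2 = 2(\bsx - \calP_{\bsX^*}(\bsx))^\top\dot{\bsx}$ (valid since $\bsX^*$ is closed and convex by Assumption~\ref{cdc18so:ass:filc}) and $\nabla\tilde{f}(\calP_{\bsX^*}(\bsx)) = \nabla\tilde{f}(\bsx^*)$ (Assumption~\ref{cdc18so:ass:singleton}) to rewrite the gradient-gap term as $(\nabla\tilde{f}(\bsx) - \nabla\tilde{f}(\calP_{\bsX^*}(\bsx)))^\top(\bsx - \calP_{\bsX^*}(\bsx))$. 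Combined with $\alpha\|\bsx\|_\bsL^2$, Lemma~\ref{cdc18so:prop} lower-bounds this by $\nu_1\|\bsx - \calP_{\bsX^*}(\bsx)\|^2$. The remaining indefinite terms involving $L_f$ and $\rho(L)\beta$ would be controlled by Young's inequality with splits tuned so that their contribution to the $\|\bsx - \calP_{\bsX^*}(\bsx)\|^2$ coefficient is dominated by $\epsilon_1\nu_1$; this is exactly what forces $\epsilon_1 = \max\{\nu_1^{-1}(L_f^2/(2\beta) + \rho(L)\beta),\beta/\alpha\}$. Collecting, I expect $\dot V \le -\epsilon_1\nu_1\|\bsx-\calP_{\bsX^*}(\bsx)\|^2 - \tfrac{\beta}{2}\|\bsv - \bsv^*\|^2 \le -\epsilon_2(\|\bsx-\calP_{\bsX^*}(\bsx)\|^2 + \|\bsv-\bsv^*\|^2)$; paired with a comparison $V \le \epsilon_3(\|\bsx-\calP_{\bsX^*}(\bsx)\|^2 + \|\bsv-\bsv^*\|^2)$ this yields $\dot V \le -(\epsilon_2/\epsilon_3)V$, and Gr\"onwall together with $V \ge \tfrac12\|\bsx - \calP_{\bsX^*}(\bsx)\|^2$ finishes the argument.

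The hardest part will be the constant bookkeeping: the fact that $\epsilon_1$ and $\epsilon_3$ are each defined as maxima of two quantities indicates that two separate Young-inequality splits must be tuned simultaneously, with each clause of the max dominating a distinct coupling between $(\bsx-\bsx^*)$ and $(\bsv-\bsv^*)$. A second conceptual subtlety is reconciling the moving projection $\calP_{\bsX^*}(\bsx(t))$ that appears in Lemma~\ref{cdc18so:prop} with the fixed $\bsx^*$ anchoring the Lyapunov function; what bridges these is Assumption~\ref{cdc18so:ass:singleton}, which upgrades $\nabla\tilde{f}(\calP_{\bsX^*}(\bsx)) = \nabla\tilde{f}(\bsx^*)$ from a limiting statement to a pointwise identity that may be freely inserted inside the inequality chain.
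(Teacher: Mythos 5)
Your overall strategy---an augmented primal--dual Lyapunov function with a cross term, Lemma~\ref{cdc18so:prop} for the decay, Young's inequality for the couplings, a two-sided comparison, and Gr\"onwall---is the same skeleton as the paper's proof. But there is a genuine gap in the construction of $V$: you anchor the terms $\tfrac12\|\bsx-\bsx^*\|^2$, $\tfrac{1}{2\beta^2}\|\bsv-\bsv^*\|^2$, and the cross term at a \emph{fixed} $\bsx^*\in\bsX^*$, while the decay produced by Lemma~\ref{cdc18so:prop} and your target comparison $V\le\epsilon_3(\|\bsx-\calP_{\bsX^*}(\bsx)\|^2+\|\bsv-\bsv^*\|^2)$ are stated in terms of the \emph{moving} projection. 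When $X^*$ is not a singleton---exactly the regime the theorem is designed for---this comparison fails: take $\bsx=\bsone_n\otimes y^*$ with $y^*\in X^*$, $y^*\ne x^*$, and $\bsv=\bsv^*$ (which is well defined and common to all minimizers by Assumption~\ref{cdc18so:ass:singleton}); the right-hand side is zero while $V=\tfrac{n}{2}\|y^*-x^*\|^2>0$. Hence $\dot V\le-(\epsilon_2/\epsilon_3)V$ cannot be concluded from your decay estimate and sandwich. The missing idea, and what the paper actually does, is to anchor \emph{every} primal term at $\bsx^0(t)=\calP_{\bsX^*}(\bsx(t))$; the envelope identity of Lemma~\ref{cdc18so:lemma:projection}(iii), which you invoke only for the $\epsilon_1\|\bsx-\calP_{\bsX^*}(\bsx)\|^2$ term, is precisely what lets one differentiate $\tfrac12\|\bsx-\bsx^0(t)\|^2$ as if $\bsx^0$ were frozen, and Assumption~\ref{cdc18so:ass:singleton} makes $\bsv^0=-\nabla\tilde{f}(\bsx^0)/\beta$ a genuine constant so the dual terms cause no such trouble.

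A second, more technical obstruction: with the unweighted dual term $\tfrac{1}{2\beta^2}\|\bsv-\bsv^*\|^2$, its derivative along $\dot{\bsv}=\beta\bsL(\bsx-\bsx^*)$ is $\tfrac1\beta(\bsv-\bsv^*)^\top\bsL(\bsx-\bsx^*)$, which does not cancel the coupling $-\beta(\bsx-\bsx^*)^\top(\bsv-\bsv^*)$ coming from the primal dynamics because of the extra factor $\bsL$; the leftover indefinite term $(\bsv-\bsv^*)^\top(\tfrac1\beta\bsL-\beta\bsI_{np})(\bsx-\bsx^*)$ has no negative $\|\bsv-\bsv^*\|^2$ term available to absorb it at that stage. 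The paper removes this residue exactly by weighting the dual quadratic with the pseudoinverse $R\Lambda_1^{-1}R^\top\otimes\bsI_p$ of $L$ (Lemma~\ref{lemma-Xinlei}), so that $(R\Lambda_1^{-1}R^\top\otimes\bsI_p)\bsL=K_n\otimes\bsI_p$ acts as the identity on $\bsv-\bsv^0$. Your ``or a close variant'' hedge points in the right direction, but without this weighting the stated constants $\epsilon_1,\epsilon_2,\epsilon_3$ do not come out. Repairing both points essentially reproduces the paper's choice $V=2\epsilon_1V_1+\tfrac{\alpha}{2\beta}\|\bsv-\bsv^0\|^2+\bsx^\top(K_n\otimes\bsI_p)(\bsv-\bsv^0)$ with $V_1$ as in \eqref{func-V-q}.
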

\begin{proof}
The proof is given in Appendix~\ref{proof-thm2}.
\end{proof}

\begin{remark}\label{nonconvex:remark3}
If we use  $\bsL\bsx={\bf 0}_{np}$ as the constraint in \eqref{eqn:xoptcon}, then
we could construct an alternative continuous-time distributed primal-dual algorithm
\begin{subequations}\label{elia-algo}
\begin{align}
\dot{x}_i(t) =& -\alpha\sum_{j=1}^nL_{ij}x_j(t)-\beta\sum_{j=1}^nL_{ij}v_j(t)-\nabla f_i(x_i(t)), \label{elia-algo-x}\\
\dot{v}_i(t) =& \beta\sum_{j=1}^n L_{ij}x_j(t), \ \forall  x_i(0),~v_i(0)\in\mathbb{R}^p. \label{elia-algo-q}
\end{align}
\end{subequations}
Similar results as shown in Theorem \ref{thm2} could be obtained. We omit the details due to space limitations.

Different from the requirement that $v_i(0)={\bf0}_p$ in the algorithm \eqref{kia-algo}, $v_i(0)$ can be arbitrarily chosen in the algorithm  \eqref{elia-algo}. In other words, the algorithm  \eqref{elia-algo} is robust to the initial condition $v_i(0)$. However, the algorithm  \eqref{elia-algo} requires additional communication of $v_j$ in \eqref{elia-algo-q}, compared to the algorithm \eqref{kia-algo}.
\end{remark}

\begin{remark}
In \cite{Jie-ZGS,yu2016gradient,kia2015distributed,zhang2017distributed,li2018distributed,Yi2018distributed,liang2019exponential},  the exponential convergence for continuous-time distributed algorithms was also established. However, in \cite{Jie-ZGS,yu2016gradient,kia2015distributed,zhang2017distributed}, it was assumed that each local cost function is strongly convex; in \cite{li2018distributed}, it was assumed that the global cost function is strongly convex; in \cite{Yi2018distributed}, it was assumed that the global cost function is restricted strongly convex and the optimal set is a singleton; and in \cite{liang2019exponential}, it was assumed that each local cost function is convex and the primal-dual gradient map is metrically subregular.
%The common assumption in \cite{Elia_Allerton10,Cortes_TAC_CT,Kia_Auto15,liang2019exponential} is that each local cost function is convex, which is not needed here. In \cite{Elia_Allerton10,Cortes_TAC_CT}, the asymptotic convergence is established even for more general non-differentiable convex cost functions and in \cite{Cortes_TAC_CT}, the asymptotic convergence is also established when the underlying graph is directed, weight-balanced and strongly connected. In \cite{Kia_Auto15}, the exponential convergence is established under the condition that each local cost function is strongly convex. In \cite{liang2019exponential}, the exponential convergence is established under the condition that the primal-dual gradient map is metrically subregular.
In contrast, the exponential convergence result established in Theorem~\ref{thm2} only requires the global cost function satisfies the restricted secant inequality condition, but the convexity assumption on cost functions and the singleton assumption on the optimal set are not required. The potential drawback of Theorem~\ref{thm2} (as well as Theorem~\ref{thm4} in the next section) is the requirement that $\alpha>\frac{8nM^2+\nu M}{\nu\rho_2(L)}$, which uses the global information. From Remark~\ref{cdc18so:remarkalpha}, we know that this requirement can be relaxed by $\alpha>0$ if each local cost function is convex.
\end{remark}

\section{Discrete-time Distributed algorithm}\label{sec-main-dc}

Consider a discretization of the continuous-time algorithm \eqref{kia-algo-compact} by the Euler's approximation as
\begin{subequations}\label{kia-algo-dc-compact}
\begin{align}
\bm{x}(k+1)=&\bm{x}(k)-h(\alpha\bsL\bm{x}(k)+\beta\bm{v}(k)+\nabla \tilde{f}(\bm{x}(k))),\\
\bm{v}(k+1)=&\bm{v}(k)+h\beta\bsL\bm{x}(k),~\forall \bsx(0)\in\mathbb{R}^{np},~\bsv(0)={\bf 0}_{np},
\end{align}
\end{subequations}
where $h>0$ is a fixed stepsize. It is straightforward to check that the algorithm \eqref{kia-algo-dc-compact} is equivalent to the algorithm EXTRA proposed in \cite{shi2015extra} with mixing matrices $W={\bf I}_{np}-h\alpha\bsL$ and $\tilde{W}={\bf I}_{np}-h\alpha\bsL+h^2\beta^2\bsL$.
The distributed form of \eqref{kia-algo-dc-compact} is
\begin{subequations}\label{kia-algo-dc}
\begin{align}
x_i(k+1) =& x_i(k)-h(\alpha\sum_{j=1}^nL_{ij}x_j(k)+\beta v_i(k)\notag\\
& +\nabla f_i(x_i(k))), \label{kia-algo-dc-x}\\
v_i(k+1) =&v_i(k)+ h\beta\sum_{j=1}^n L_{ij}x_j(k),\notag\\
 &\forall x_i(0)\in\mathbb{R}^p, \ v_i(0)={\bf 0}_p.  \label{kia-algo-dc-q}
\end{align}
\end{subequations}

We have the following result for the discrete-time distributed primal-dual gradient decent algorithm \eqref{kia-algo-dc}.
\begin{theorem}\label{thm4}
Each agent $i\in [n]$ runs the distributed algorithm \eqref{kia-algo-dc}. If Assumptions~\ref{cdc18so:ass:filc}--\ref{cdc18so:ass:singleton} hold, $\alpha>\frac{2nL_{f}^2+\nu L_{f}}{\nu\rho_2(L)}$, $\beta>0$, and $0<h<\frac{2\epsilon_2\epsilon_4}{\eta\epsilon_3\epsilon_5}$, where $\eta=\sqrt{2}\max\{\frac{2\epsilon_1}{\rho_2(L)}+\alpha+1,4\epsilon_1+1\}>0$, $\epsilon_4=\epsilon_1\min\{\frac{1}{\rho(L)},\frac{1}{2}\}$, and $\epsilon_5=\max\{\beta^2\rho^2(L)+3\alpha^2\rho^2(L)+3L_{f}^2,3\beta^2\}$, then $\bsx(k)$ linearly converges to $\bsX^*$  with a rate no less than $1-\frac{h(2\epsilon_2\epsilon_4-h\eta\epsilon_3\epsilon_5)}{4\epsilon_3\epsilon_4}$.
\end{theorem}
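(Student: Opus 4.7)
The plan is to mimic the Lyapunov argument used for the continuous-time Theorem~\ref{thm2}, compensating for the additional $O(h^2)$ terms that arise from the Euler discretization. First, I would fix a reference pair $(\bsx^*,\bsv^*)$ as follows. For any $\bsx^*\in\bsX^*$ we have $\bsL\bsx^*={\bf 0}_{np}$, so the equilibrium condition for \eqref{kia-algo-dc-compact} forces $\beta\bsv^*=-\nabla\tilde{f}(\bsx^*)$. The KKT condition for \eqref{eqn:xoptcon} places $\nabla\tilde{f}(\bsx^*)\in\image(\bsL^{1/2})=\image(\bsL)$, and Assumption~\ref{cdc18so:ass:singleton} guarantees that $\bsv^*$ is independent of which $\bsx^*\in\bsX^*$ is chosen. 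Since $\bsv(0)={\bf 0}_{np}\in\image(\bsL)$ and the update $\bsv(k+1)-\bsv(k)=h\beta\bsL\bsx(k)$ preserves this subspace, $\bsv(k)-\bsv^*\in\image(\bsL)$ for all $k$; this is what will let us trade $\|\bsL^{1/2}(\bsv(k)-\bsv^*)\|^2$ for $\rho_2(L)\|\bsv(k)-\bsv^*\|^2$ via the spectral gap.

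Next I would introduce the same type of quadratic Lyapunov function as in the continuous-time proof, namely
\[V(k)=\epsilon_1\|\bsx(k)-\calP_{\bsX^*}(\bsx(k))\|^2+\tfrac{1}{2}\|\bsv(k)-\bsv^*\|^2+\text{(cross and weighted-}\bsL\text{ terms)},\]
chosen so that $\epsilon_4\|\bsz(k)\|^2\le V(k)\le\epsilon_3\|\bsz(k)\|^2$, where $\bsz(k)=\col(\bsx(k)-\calP_{\bsX^*}(\bsx(k)),\bsv(k)-\bsv^*)$; the constants $\epsilon_3$ and $\epsilon_4$ in the statement are precisely these sandwich constants. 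To control $V(k+1)$, I would exploit $\|\bsx(k+1)-\calP_{\bsX^*}(\bsx(k+1))\|^2\le\|\bsx(k+1)-\calP_{\bsX^*}(\bsx(k))\|^2$ (optimality of the projection) and then expand the squares using \eqref{kia-algo-dc-compact}.

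The resulting difference $V(k+1)-V(k)$ splits into a part linear in $h$ and a part of order $h^2$. The linear part reproduces the $\dot V$ computation of Theorem~\ref{thm2}: skew-symmetry of the $\bsL$-coupling between the primal and dual updates cancels the $\beta$-terms, and the remaining expression is bounded above by $-h\epsilon_2\|\bsz(k)\|^2$ via Lemma~\ref{cdc18so:prop} applied to the pair $(\bsx(k),\calP_{\bsX^*}(\bsx(k)))$. The quadratic part consists of $h^2\|\bsd(k)\|^2$ and $h^2\beta^2\|\bsL\bsx(k)\|^2$, where $\bsd(k)=\alpha\bsL\bsx(k)+\beta\bsv(k)+\nabla\tilde{f}(\bsx(k))$; writing $\bsd(k)=\bsd(k)-\bsd^*$ with $\bsd^*={\bf 0}_{np}$, using $\bsL\bsx^*={\bf 0}_{np}$, Assumption~\ref{cdc18so:ass:fiu} for the gradient term, and $\|a+b+c\|^2\le 3(\|a\|^2+\|b\|^2+\|c\|^2)$, each contribution is dominated by $\epsilon_5\|\bsz(k)\|^2$. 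The factor $\eta$ absorbs the weights coming from the cross terms in $V$.

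Collecting everything yields the key one-step inequality
\[V(k+1)\le V(k)-h\epsilon_2\|\bsz(k)\|^2+\tfrac{1}{2}h^2\eta\epsilon_5\|\bsz(k)\|^2,\]
which, combined with the two sandwich bounds, gives
\[V(k+1)\le\Bigl(1-\frac{h(2\epsilon_2\epsilon_4-h\eta\epsilon_3\epsilon_5)}{2\epsilon_3\epsilon_4}\Bigr)V(k).\]
The stepsize restriction $0<h<\frac{2\epsilon_2\epsilon_4}{\eta\epsilon_3\epsilon_5}$ makes the contraction factor strictly less than one, and the lower sandwich bound translates this into linear convergence of $\|\bsx(k)-\calP_{\bsX^*}(\bsx(k))\|$ at the stated rate. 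The main obstacle I expect is the second step above: making sure the discrete-time cross terms really cancel as cleanly as in continuous time after one moves from a fixed minimizer to the time-varying projection $\calP_{\bsX^*}(\bsx(k))$, and doing the bookkeeping sharply enough to recover exactly the constants $\eta$, $\epsilon_4$, $\epsilon_5$ appearing in the theorem.
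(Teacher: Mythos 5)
Your proposal is correct and follows essentially the same route as the paper: the same Lyapunov function $V$ carried over from the continuous-time analysis, the same sandwich bounds $\epsilon_4\|\bsz\|^2\le V\le \epsilon_3\|\bsz\|^2$, reuse of the continuous-time dissipation inequality for the $O(h)$ term, and the bound $\|\bsF(\bsz)\|^2\le(\epsilon_5/\epsilon_4)V$ for the $O(h^2)$ term. The only cosmetic difference is that the paper obtains the one-step inequality by observing that $\nabla V$ is $\eta$-Lipschitz and invoking the descent lemma (Lemma~\ref{cdc18so:lemma:lipschitz}) rather than expanding the squares and appealing to projection optimality as you do; both yield $V(\bsz(k+1))\le \bigl(1-\frac{h(2\epsilon_2\epsilon_4-h\eta\epsilon_3\epsilon_5)}{2\epsilon_3\epsilon_4}\bigr)V(\bsz(k))$ and hence the stated rate.
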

\begin{proof}
The proof is given in Appendix~\ref{proof-thm4}.
\end{proof}

\begin{remark}
In \cite{xin2019frost,jakovetic2015linear,nedic2017achieving,nedic2017geometrically,qu2018harnessing,qu2017accelerated,xi2018add,xu2018convergence,xin2018linear,pu2018push,jakovetic2019unification,Damiano-TAC2016,saadatniaki2018optimization,zeng2017extrapush,xi2017dextra,shi2015extra},  the linear convergence for distributed discrete-time algorithms was also established. However, in \cite{xin2019frost,jakovetic2015linear,nedic2017achieving,nedic2017geometrically,qu2018harnessing,qu2017accelerated,xi2018add,xu2018convergence,xin2018linear,pu2018push,jakovetic2019unification}, it was assumed that each local cost function is strongly convex; in \cite{Damiano-TAC2016,saadatniaki2018optimization}, it was assumed that the global cost function is strongly convex; in \cite{zeng2017extrapush,xi2017dextra}, it was assumed that each local cost function is restricted strongly convex and the optimal set $X^*$ is a singleton; and in \cite{shi2015extra}, it was assumed that the global cost function is restricted strongly convex and $X^*$ is a singleton. In contrast, the linear convergence result established in Theorem~\ref{thm4} only requires the global cost function satisfies the restricted secant inequality condition, but the convexity assumption on cost functions and the singleton assumption on the optimal set are not required. One potential drawback of Theorem~\ref{thm4} is that the requirement on the stepsize $h$ is too conservative.
\end{remark}

\section{Simulations}\label{sec-simulation}
In this section, we verify the theoretical result through a numerical example.
Consider the distributed optimization problem \eqref{eqn:xopt} with
\begin{align*}
f_i(x)=\begin{cases}
         b_{i,1}(x+1)^2, & x\le-1,\\
         b_{i,2}x^4, & -1< x\le0, \\
         1-\sqrt{1-x^2}+b_{i,3}x^2, & 0\le x<\frac{\sqrt{2}}{2}, \\
         g_{i,1}(x), & \frac{\sqrt{2}}{2}\le x<1,\\
         g_{i,2}(x), & x\ge1,
       \end{cases}
\end{align*}
where $g_{i,1}(x)=\sqrt{1-(x-\sqrt{2})^2}-\sqrt{2}+1+b_{i,3}x^2$, $g_{i,2}(x)=\frac{1}{2}(x-1+\sqrt{\frac{\sqrt{2}-1}{2}})^2+\sqrt{2\sqrt{2}-2}+\frac{5-5\sqrt{2}}{4}+b_{i,3}x^2$, and $b_{i,j},~j=1,2,3$ are constants that are randomly generated and satisfy the condition that $\sum_{i=1}^nb_{i,1}>0$ and $\sum_{i=1}^nb_{i,2}=\sum_{i=1}^nb_{i,3}=0$. These $f_i(x),~i\in[n]$ are modifications of  Example~\ref{nonconvex:example}.
Clearly, $f_i$ is non-convex but differentiable with
\begin{align*}
\nabla f_i(x)=\begin{cases}
         2b_{i,1}(x+1), & x\le-1,\\
         4b_{i,2}x^3, & -1< x\le0, \\
         \frac{x}{\sqrt{1-x^2}}+2b_{i,3}x, & 0\le x<\frac{\sqrt{2}}{2}, \\
         \frac{\sqrt{2}-x}{\sqrt{1-(x-\sqrt{2})^2}}+2b_{i,3}x, & \frac{\sqrt{2}}{2}\le x<1,\\
         (2b_{i,3}+1)x-1+\sqrt{\frac{\sqrt{2}-1}{2}}, & x\ge1.
       \end{cases}
\end{align*}
It is easy to see that the global objective $f(x)=\sum_{i=1}^nf_i(x)$ satisfies the restricted secant inequality condition with constant $\nu=\min\{\sqrt{\frac{\sqrt{2}-1}{2}},2\sum_{i=1}^nb_{i,1}\}$. Moreover, the optimal set is $[-1,0]$. The communication graph between agents is modeled as a ring graph with $n=10$ agents, see Fig.~\ref{nonconvex:fig:network}.

\begin{figure}[!ht]
\hspace{10mm}\centering{
{
\begin{tikzpicture}[-,node distance=1.4cm,
  thick,main node/.style={circle,fill=yellow!20,draw,font=\sffamily\normalsize\bfseries}]
  \node[main node] (1) {1};
  \node[main node] (2) [above right of=1] {2};
  \node[main node] (3) [right of=2] {3};
  \node[main node] (4) [right of=3] {4};
  \node[main node] (5) [below right of=4] {5};
  \node[main node] (6) [below of=5] {6};
  \node[main node] (7) [below left of=6] {7};
  \node[main node] (8) [left of=7] {8};
  \node[main node] (9) [left of=8] {9};
  \node[main node] (10) [above left of=9] {10};
\draw (1) -- (2)
	  (2) -- (3)
	  (3) -- (4)
	  (4) -- (5)
	  (5) -- (6)
	  (6) -- (7)
	  (7) -- (8)
	  (8) -- (9)
	  (9) -- (10)
	  (10) -- (1);
	
\end{tikzpicture}\label{fig:network-full}}}$\qquad\qquad$
\vspace{3mm}
\caption{The communication graph.}
\label{nonconvex:fig:network}
\end{figure}
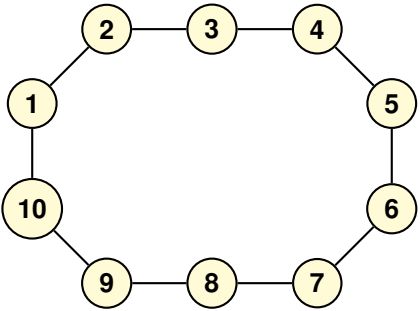

We run the discrete-time distributed primal-dual gradient decent algorithm \eqref{kia-algo-dc} with $\alpha=\beta=10$ and $h=0.02$. The initial value $x_i(0)$ is randomly generated. The trajectories of the primal and dual variables of each agent are plotted in Fig.~\ref{nonconvex:fig:x-traj} and Fig.~\ref{nonconvex:fig:v-traj}, respectively. We see that each primal variable converges to zero which is one global minimizer and correspondingly each dual variable also converges to zero. Evolutions of residual $\|\bsx(k)-\calP_{\bsX^*}(\bsx(k))\|/\|\bsx(0)-\calP_{\bsX^*}(\bsx(0))\|$ are shown in Fig.~\ref{nonconvex:fig:performance}. The results illustrate linear convergence, which are consistent with the theoretical results of Theorem~\ref{thm4}.

\begin{figure}[!ht]
\centering
  \includegraphics[width=0.47\textwidth]{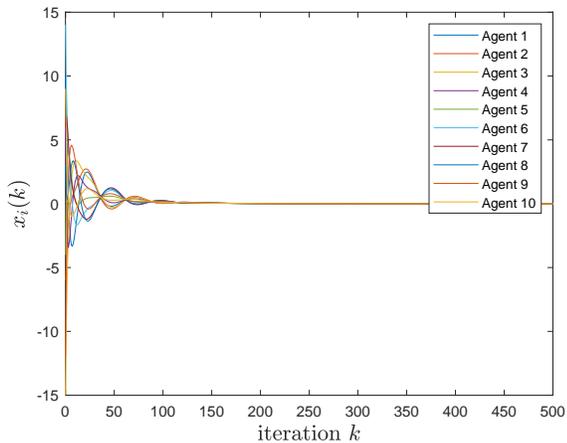}
  \caption{Evolutions of local primal variables.}
  \label{nonconvex:fig:x-traj}
\end{figure}

\begin{figure}[!ht]
\centering
  \includegraphics[width=0.47\textwidth]{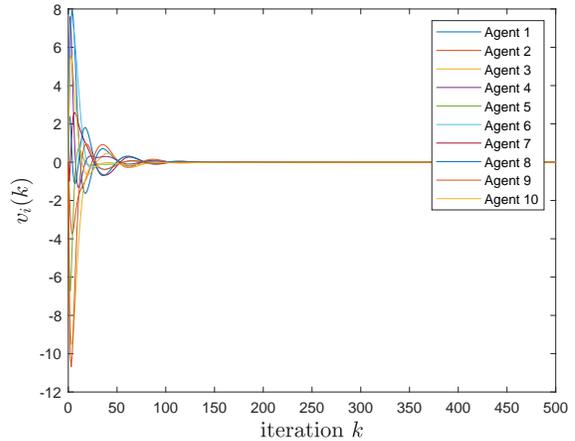}
  \caption{Evolutions of local dual variables.}
  \label{nonconvex:fig:v-traj}
\end{figure}

\begin{figure}[!ht]
\centering
  \includegraphics[width=0.47\textwidth]{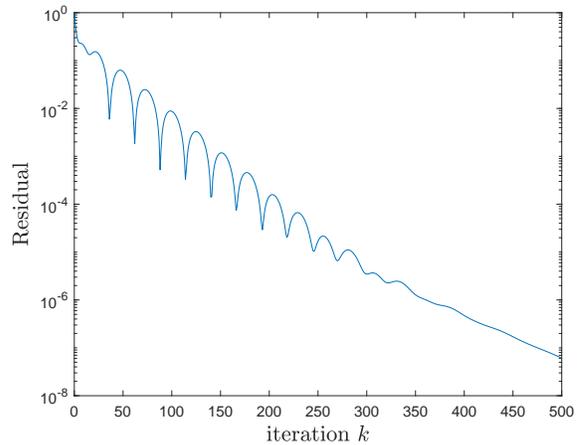}
  \caption{Evolutions of residual.}
  \label{nonconvex:fig:performance}
\end{figure}

\section{Conclusions}\label{sec-conclusion}
In this paper, we derived the exponential convergence rate of the continuous-time distributed primal-dual algorithm for solving distributed smooth optimization when the global cost function  satisfies the
restricted secant inequality condition. This condition relaxes the standard strong convexity condition.  We also showed that the discrete-time counterpart of the continuous-time algorithm establishes linear convergence rate under the same condition. Interesting open questions for future work include proving the linear convergence rate for larger stepsize, considering asynchronous and dynamic network setting, studying constraints, and relaxing the restricted secant inequality condition by the Polyak-{\L}ojasiewicz condition.

\bibliographystyle{IEEEtran}
\bibliography{refextra}

\appendix

\subsection{Useful Lemmas}\label{app-lemmas}

\begin{lemma}\label{lemma-Xinlei} (Lemmas~1 and 2 in \cite{Yi2018distributed})
Let $L$ be the Laplacian matrix of the connected graph $\mathcal{G}$ and $K_n=\bsI_n-\frac{1}{n}{\bf 1}_n{\bf 1}^{\top}_n$.
Then $L$ and $K_n$ are positive semi-definite, $\nullrank(L)=\nullrank(K_n)=\{{\bf 1}_n\}$, $L\le\rho(L)I_n$,
\begin{align}\label{KL-L-eq}
&K_nL=LK_n=L,~\rho(K_n)=1,\notag\\
&\text{and}~0\le\rho_2(L)K_n\le L\le\rho(L)K_n.
\end{align}
Moreover, there exists an orthogonal matrix $Q=[r \ R]\in \mathbb{R}^{n \times n}$ with $r=\frac{1}{\sqrt{n}}\mathbf{1}_n$ and $R \in \mathbb{R}^{n\times (n-1)}$ such that
\begin{equation}
L=[r \ R]
\begin{bmatrix}
0 & 0\\
0 & \Lambda_1
\end{bmatrix}
\begin{bmatrix}
r^{\top}\\
R^{\top}
\end{bmatrix},\label{cdc18so:lemma:LReq}
\end{equation}
\begin{equation}\label{lemma-eq}
R\Lambda_1^{-1}R^{\top}L=LR\Lambda_1^{-1}R^{\top}=K_n,
\end{equation}
\begin{equation}\label{lemma-eq2}
\frac{1}{\rho(L)}K_n\leq R\Lambda_1^{-1}R^{\top}\le\frac{1}{\rho_2(L)}K_n,
\end{equation}
where $\Lambda_1=\diag\left([\lambda_2,\dots,\lambda_n]\right)$ with $0<\lambda_2\leq\dots\leq\lambda_n$ are the eigenvalues of the Laplacian matrix $L$, and $\sqrt{\Lambda_1}=\diag\left([\sqrt{\lambda_2},\dots,\sqrt{\lambda_n}]\right)$.
\end{lemma}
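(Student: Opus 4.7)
The plan is to establish everything via the spectral decomposition of $L$, exploiting the fact that $L$ is a real symmetric positive semi-definite matrix whose unique (up to scaling) zero-eigenvector is $\mathbf{1}_n$ when $\mathcal{G}$ is connected. First I would record the standard identity $x^\top L x=\sum_{(i,j)\in\mathcal{E}}a_{ij}(x_i-x_j)^2$, which immediately gives $L\succeq 0$ and, by connectedness of $\mathcal{G}$, shows the quadratic form vanishes only on constant vectors, so $\nullrank(L)=\mathrm{span}\{\mathbf{1}_n\}$. For $K_n$, I would observe that $K_n$ is the orthogonal projector onto $\mathbf{1}_n^\perp$: it is symmetric and idempotent, so its spectrum is $\{0,1\}$, giving $K_n\succeq 0$, $\rho(K_n)=1$, and $\nullrank(K_n)=\mathrm{span}\{\mathbf{1}_n\}$. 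The bound $L\le \rho(L)I_n$ is then the definition of spectral radius for symmetric matrices, and the identities $K_nL=LK_n=L$ follow by writing $K_n=I_n-\tfrac{1}{n}\mathbf{1}_n\mathbf{1}_n^\top$ and using $L\mathbf{1}_n=\mathbf{1}_n^\top L=0$.

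Next I would invoke the spectral theorem for symmetric matrices to produce an orthonormal eigenbasis of $L$. Choosing the zero-eigenvector to be $r=\tfrac{1}{\sqrt{n}}\mathbf{1}_n$ and packing the remaining orthonormal eigenvectors (for eigenvalues $0<\lambda_2\le\cdots\le\lambda_n$) into $R\in\mathbb{R}^{n\times(n-1)}$, I obtain $Q=[r\ R]$ orthogonal with $QQ^\top=rr^\top+RR^\top=I_n$, together with the decomposition displayed in \eqref{cdc18so:lemma:LReq}. Multiplying out yields two facts I will use repeatedly: $L=R\Lambda_1 R^\top$ (since the first eigenvalue is zero) and $RR^\top=I_n-rr^\top=K_n$. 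Moreover $R^\top R=I_{n-1}$ because the columns of $R$ are orthonormal.

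With these in hand, the remaining two identities are immediate algebra. For \eqref{lemma-eq}, compute
\begin{align*}
R\Lambda_1^{-1}R^\top L=R\Lambda_1^{-1}R^\top R\Lambda_1 R^\top=R\Lambda_1^{-1}\Lambda_1R^\top=RR^\top=K_n,
\end{align*}
and likewise for $LR\Lambda_1^{-1}R^\top$. For the two sandwich inequalities, I would diagonalize both sides simultaneously in the basis $Q$: in this basis $K_n=\diag(0,1,\ldots,1)$, $L=\diag(0,\lambda_2,\ldots,\lambda_n)$, and $R\Lambda_1^{-1}R^\top=\diag(0,\lambda_2^{-1},\ldots,\lambda_n^{-1})$. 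Since $\rho_2(L)=\lambda_2$ and $\rho(L)=\lambda_n$, the entrywise bounds $\rho_2(L)\le\lambda_i\le\rho(L)$ and $1/\rho(L)\le 1/\lambda_i\le 1/\rho_2(L)$ hold for $i=2,\ldots,n$, proving \eqref{KL-L-eq} and \eqref{lemma-eq2}.

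There is no real obstacle; the lemma is a bookkeeping exercise in spectral theory and the only subtlety is ensuring that the zero eigenvector of $L$ is placed first in the orthogonal decomposition so that $L=R\Lambda_1 R^\top$ and $K_n=RR^\top$ share the same range $\mathrm{span}\{R\}=\mathbf{1}_n^\perp$. Once this alignment is made explicit, the three non-trivial assertions—$\rho_2(L)K_n\le L\le \rho(L)K_n$, the projector formula $R\Lambda_1^{-1}R^\top L=K_n$, and the reciprocal sandwich $K_n/\rho(L)\le R\Lambda_1^{-1}R^\top\le K_n/\rho_2(L)$—all collapse to comparisons of the diagonal entries of two commuting diagonal matrices, which I would dispatch in a single paragraph.
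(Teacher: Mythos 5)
Your proof is correct. Note that the paper itself supplies no argument for this statement: it is imported verbatim as Lemmas~1 and~2 of \cite{Yi2018distributed}, so there is no in-paper proof to compare against; your write-up is exactly the standard spectral argument one would expect to find in that reference. All the essential points are in place: the quadratic-form identity for $L$ together with connectedness pins down $\nullrank(L)=\mathrm{span}\{{\bf 1}_n\}$; recognizing $K_n$ as the orthogonal projector onto ${\bf 1}_n^\perp$ gives its spectrum $\{0,1\}$ and hence $\rho(K_n)=1$; the identities $K_nL=LK_n=L$ follow from $L{\bf 1}_n={\bf 0}_n$; and once you align the zero eigenvector with $r=\frac{1}{\sqrt{n}}{\bf 1}_n$ so that $RR^\top=K_n$ and $L=R\Lambda_1R^\top$, both \eqref{lemma-eq} and the two sandwich inequalities reduce to comparing diagonal entries of simultaneously diagonalized matrices, which you do correctly. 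The only cosmetic nit is the edge-sum identity: depending on whether $\mathcal{E}$ records each undirected edge once or as an ordered pair, one writes $x^\top Lx=\frac{1}{2}\sum_{i,j}a_{ij}(x_i-x_j)^2$; the constant is immaterial to positive semi-definiteness and to the null-space characterization, so nothing downstream is affected.
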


\begin{lemma}\label{cdc18so:lemma:projection} (Theorem~1.5.5 in \cite{facchinei2007finite})
Let $\mathbb{S}$ be a nonempty closed convex subset of $\mathbb{R}^{p}$. Then, (i) for each $x\in\mathbb{R}^p$, the projection $\calP_{\mathbb{S}}(x)$  exists and is unique; (ii) $\calP_{\mathbb{S}}(x)$ is nonexpansive, i.e., $\|\calP_{\mathbb{S}}(a)-\calP_{\mathbb{S}}(b)\|\le\|a-b\|,~\forall a,b\in\mathbb{R}^p$; (iii) the squared distance function $g(x)=\|x-\calP_{\mathbb{S}}(x)\|^2$ is continuously differentiable and $\nabla g(x)=2(x-\calP_{\mathbb{S}}(x))$.
\end{lemma}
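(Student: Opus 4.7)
The plan is to mirror the continuous-time Lyapunov analysis used to prove Theorem~\ref{thm2}, treating the discrete-time iteration \eqref{kia-algo-dc-compact} as its Euler perturbation and showing that the Lyapunov function contracts by a multiplicative factor per step provided $h$ is sufficiently small. The fact that the stepsize bound depends on the same constants $\epsilon_1,\epsilon_2,\epsilon_3$ that appear in Theorem~\ref{thm2} together with new discretization constants $\epsilon_4,\epsilon_5,\eta$ strongly suggests that the same Lyapunov function works here, and that the extra terms $\epsilon_4,\epsilon_5,\eta$ arise purely from controlling the $h^2$ remainder from Euler integration.

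First, I would reduce to a suitable coordinate system by applying Lemma~\ref{lemma-Xinlei}: use the decomposition $L = R\Lambda_1 R^\top$ to split $\bsv(k)$ into its consensus component (which is preserved by the dynamics because $\bsv(0) = \bszero$ and $\bsL$ annihilates $\bsone_n$) and its orthogonal component. Picking an equilibrium pair $(\bsx^*,\bsv^*)$ with $\bsx^* \in \bsX^*$ chosen so that $\bsv^*$ solves $\beta \bsv^* + \nabla\tilde f(\bsx^*) \in \image(\bsL)$ is possible by Assumption~\ref{cdc18so:ass:singleton}. Next, I would take the same Lyapunov function candidate used for the continuous-time proof, roughly of the form
\begin{align*}
V(k) = \epsilon_1\|\bsx(k) - \calP_{\bsX^*}(\bsx(k))\|^2 + \tfrac{1}{2}\|\bsv(k) - \bsv^*\|_{R\Lambda_1^{-1}R^\top \otimes \bsI_p}^2 + \text{cross terms},
\end{align*}
so that the upper/lower sandwich $\epsilon_2 W(k) \le V(k) \le \epsilon_3 W(k)$ for an appropriate residual $W$ is already established from the Theorem~\ref{thm2} machinery. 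Then I would compute $V(k+1)-V(k)$, split the increment into a first-order part proportional to $h$ and a second-order part proportional to $h^2$, and handle the two parts separately.

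For the $h$ part, the computation is essentially the same as the time derivative of $V$ along \eqref{kia-algo-compact} in the continuous-time proof, so the bound produced by Lemma~\ref{cdc18so:prop} (which is where the requirement $\alpha > (2nL_f^2+\nu L_f)/(\nu\rho_2(L))$ enters) yields a negative contribution of order $-\epsilon_2 W(k)$, and the nonexpansiveness of $\calP_{\bsX^*}$ from Lemma~\ref{cdc18so:lemma:projection} is used to handle the change of the projected point between $k$ and $k+1$. For the $h^2$ remainder, the two dominant terms are $\|\alpha\bsL\bsx(k) + \beta\bsv(k) + \nabla\tilde f(\bsx(k))\|^2$ and $\|\beta \bsL\bsx(k)\|^2$. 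Using $(a+b+c)^2 \le 3(a^2+b^2+c^2)$, the Lipschitz gradient assumption \ref{cdc18so:ass:fiu} to bound $\|\nabla\tilde f(\bsx)-\nabla\tilde f(\calP_{\bsX^*}(\bsx))\|^2 \le L_f^2\|\bsx - \calP_{\bsX^*}(\bsx)\|^2$, and $\|\bsL\bsx\|^2 \le \rho^2(L)\|\bsx - \calP_{\bsX^*}(\bsx)\|^2$ (since $\bsL\calP_{\bsX^*}(\bsx) = \bszero$), gives an $h^2$ bound of the form $h^2 \eta\, \epsilon_5 \cdot (\text{positive combination of residuals})$, matching the role of $\epsilon_5$ and $\eta$ in the stated stepsize constraint. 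Combining both parts yields
\begin{align*}
V(k+1) - V(k) \le -h\Bigl(\tfrac{2\epsilon_2\epsilon_4 - h\eta\epsilon_3\epsilon_5}{2\epsilon_4}\Bigr)\cdot\tfrac{V(k)}{\epsilon_3},
\end{align*}
which is exactly the advertised contraction rate once $h < 2\epsilon_2\epsilon_4/(\eta\epsilon_3\epsilon_5)$. Linear convergence of $\bsx(k)$ to $\bsX^*$ then follows from the sandwich inequality for $V$.

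The main obstacle I expect is the bookkeeping of the discretization remainder: concretely, re-expressing the $h^2$ terms so that they are cleanly bounded by the Lyapunov function itself (rather than by auxiliary quantities), and making sure the constants $\epsilon_4$ and $\eta$ as defined really are large enough to absorb all the cross terms arising from the dual update $\bsv(k+1) - \bsv(k) = h\beta\bsL\bsx(k)$ inside the weighted dual norm $\|\cdot\|_{R\Lambda_1^{-1}R^\top \otimes \bsI_p}$; this is where the appearance of $\rho(L)$ and $\rho_2(L)$ in $\epsilon_4$ and $\eta$ comes from, via the sandwich $\rho_2(L)^{-1}K_n \ge R\Lambda_1^{-1}R^\top \ge \rho(L)^{-1}K_n$ of Lemma~\ref{lemma-Xinlei}. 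Once those inequalities are in hand, the remaining argument is a direct copy of the continuous-time Lyapunov calculation with the scalar substitution $\dot V \leadsto (V(k+1)-V(k))/h$.
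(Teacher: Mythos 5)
Your proposal does not address the statement at all. The statement to be proved is Lemma~\ref{cdc18so:lemma:projection}: the three elementary facts about the Euclidean projection onto a nonempty closed convex set $\mathbb{S}\subseteq\mathbb{R}^p$ --- (i) existence and uniqueness of $\calP_{\mathbb{S}}(x)$, (ii) nonexpansiveness, and (iii) continuous differentiability of $g(x)=\|x-\calP_{\mathbb{S}}(x)\|^2$ with $\nabla g(x)=2(x-\calP_{\mathbb{S}}(x))$. What you have written instead is a proof sketch of Theorem~\ref{thm4} (linear convergence of the discrete-time primal--dual iteration): Lyapunov contraction, Euler remainder bookkeeping, the constants $\epsilon_1,\dots,\epsilon_5,\eta$, and the stepsize restriction all belong to that result, not to the projection lemma. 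None of the claims (i)--(iii) is established, or even mentioned, anywhere in your argument.

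For the record, the paper itself does not prove this lemma either; it is imported verbatim as Theorem~1.5.5 of \cite{facchinei2007finite} and used as a black box (e.g.\ part (iii) is what justifies differentiating $V_1$ in the proof of Theorem~\ref{thm2}, since $\bsx^0=\calP_{\bsX^*}(\bsx)$ depends on $\bsx$). If you wanted to supply a self-contained proof, the standard route is: for (i), minimize the coercive, strictly convex function $y\mapsto\|x-y\|^2$ over the closed set $\mathbb{S}$ and use the variational characterization $(x-\calP_{\mathbb{S}}(x))^\top(y-\calP_{\mathbb{S}}(x))\le0$ for all $y\in\mathbb{S}$; for (ii), apply that characterization at $a$ and $b$, add the two inequalities, and use Cauchy--Schwarz; for (iii), note that $g(x)=\|x\|^2-\sup_{y\in\mathbb{S}}(2x^\top y-\|y\|^2)$ expresses $g$ as $\|x\|^2$ minus a convex function whose subdifferential at $x$ is the singleton $\{2\calP_{\mathbb{S}}(x)\}$, which gives differentiability with the stated gradient, and continuity of $\nabla g$ then follows from (ii). As it stands, however, your submission proves a different theorem and leaves the assigned statement entirely unproved.
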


\begin{lemma}\label{cdc18so:lemma:lipschitz} (Lemma~1.2.3 in \cite{nesterov2018lectures})
If the function $g:\mathbb{R}^{p}\rightarrow \mathbb{R}$ is differentiable and smooth with constant $\eta>0$, then
\begin{align*}
|g(y)-g(x)-(y-x)^\top\nabla g(x)|\le\frac{\eta}{2}\|y-x\|^2,~\forall x,y\in\mathbb{R}^{p}.
\end{align*}
\end{lemma}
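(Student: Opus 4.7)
The plan is to prove this classical descent lemma via the fundamental theorem of calculus applied along the line segment joining $x$ and $y$, combined with the Lipschitz gradient hypothesis. The key identity is that for any differentiable $g$, one has $g(y) - g(x) = \int_0^1 \nabla g(x + t(y-x))^\top (y-x)\, dt$, which comes from setting $\varphi(t) = g(x + t(y-x))$ and applying $\varphi(1) - \varphi(0) = \int_0^1 \varphi'(t)\, dt$ with the chain rule.

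From here, the first step is to subtract the constant integrand $(y-x)^\top \nabla g(x) = \int_0^1 (y-x)^\top \nabla g(x)\, dt$ from both sides, yielding
\begin{align*}
g(y) - g(x) - (y-x)^\top \nabla g(x) = \int_0^1 \bigl(\nabla g(x + t(y-x)) - \nabla g(x)\bigr)^\top (y-x)\, dt.
\end{align*}
The second step is to take absolute values, pass them under the integral, and apply Cauchy--Schwarz to each integrand, giving the bound $\int_0^1 \|\nabla g(x + t(y-x)) - \nabla g(x)\| \cdot \|y-x\|\, dt$. The third step is to invoke the Lipschitz hypothesis $\|\nabla g(a) - \nabla g(b)\| \le \eta \|a-b\|$ with $a = x + t(y-x)$ and $b = x$, which gives $\|\nabla g(x + t(y-x)) - \nabla g(x)\| \le \eta t \|y-x\|$. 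Finally, evaluating $\int_0^1 \eta t\, dt \cdot \|y-x\|^2 = \frac{\eta}{2}\|y-x\|^2$ closes the estimate.

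There is really no obstacle of substance here, since the argument is short and each ingredient is standard; the only minor technical matter is justifying the interchange of the absolute value with the integral, which follows from the triangle inequality for Riemann integrals of continuous integrands (continuity of $\nabla g$ along the segment is guaranteed by the Lipschitz assumption). Because the bound $|g(y) - g(x) - (y-x)^\top \nabla g(x)| \le \frac{\eta}{2}\|y-x\|^2$ is symmetric in the roles of $x$ and $y$ only up to absolute value, the absolute value in the statement is naturally produced by the triangle inequality applied to the signed integral, so no separate case analysis is needed.
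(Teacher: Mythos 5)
Your proof is correct and complete. The paper does not prove this lemma at all---it simply cites it as Lemma~1.2.3 of the Nesterov reference---and your argument (fundamental theorem of calculus along the segment, subtracting the constant integrand, Cauchy--Schwarz, and the Lipschitz bound giving $\int_0^1 \eta t\,dt = \eta/2$) is exactly the standard textbook proof found in that reference, so there is nothing to reconcile.
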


\subsection{Proof of Lemma~\ref{cdc18so:prop}}\label{cdc18so:propproof}
The following proof is inspired the proof of Proposition~3.6 in \cite{shi2015extra} and the key challenge is that here $\tilde{f}$ may be non-convex  and $X^*$ may not be a singleton.

For any $\bsx\in \mathbb{R}^{np}$, orthogonally decompose it as
\begin{align}\label{cdc18so:lemma:projectionconeq}
\bsx=\bsa+\bsb,
\end{align}
so that $\bsa\in\mathbb{H}$ and $\bsb\in\mathbb{H}^\bot$, where $\mathbb{H}=\{{\bf 1}_n \otimes x:~x\in\mathbb{R}^p\}$ is a linear subspace of $\mathbb{R}^{np}$. It is straightforward to check that such a decomposition is unique and $\bsa={\bf 1}_n \otimes a$ with $a=\frac{1}{n}({\bf 1}_n^\top\otimes \bsI_p)\bsx$. Moreover, it holds that $\calP_{\bsX^*}(\bsx)=\calP_{\bsX^*}(\bsa)$ since
\begin{align*}
&\min_{\bsc\in\bsX^*}\|\bsc-\bsx\|^2=\min_{\bsc\in\bsX^*}\|\bsc-\bsa-\bsb\|^2\\
&=\|\bsb\|^2+\min_{\bsc\in\bsX^*}\|\bsc-\bsa\|^2,
\end{align*}
where the last equality holds due to that $\bsc\in\bsX^*\subseteq\mathbb{H}$, $\bsa\in\mathbb{H}$, and $\bsb\in\mathbb{H}^\bot$.  For convenience, in the following let $\bsx^*={\bf 1}_n \otimes x^*=\calP_{\bsX^*}(\bsx)$. Then, $\calP_{X^*}(a)=x^*$ and
\begin{align*}
\|\bsx-\bsx^*\|^2=\|\bsa-\bsx^*\|^2+\|\bsb\|^2.
\end{align*}
From Assumption~\ref{cdc18so:ass:fil}, we have
\begin{align}\label{cdc18so:prop:equf1}
&(\nabla \tilde{f}(\bsa)-\nabla \tilde{f}(\bsx^*))^\top(\bsa-\bsx^*)\nonumber\\
&=\sum_{i=1}^{n}(\nabla f_i(a)-\nabla f_i(x^*))^\top(a-x^*)\nonumber\\
&=(\nabla f(a)-\nabla f(x^*))^\top(a-x^*)\nonumber\\
&\ge \nu\|a-x^*\|^2=\frac{\nu}{n}\|\bsa-\bsx^*\|^2.
\end{align}
From Assumption~\ref{cdc18so:ass:fiu}, we have
\begin{align}
(\nabla \tilde{f}(\bsx)-\nabla \tilde{f}(\bsa))^\top(\bsx-\bsa)&\ge-L_{f}\|\bsx-\bsa\|^2\notag\\
&=-L_{f}\|\bsb\|^2,\label{cdc18so:prop:equf2}\\
(\nabla \tilde{f}(\bsa)-\nabla \tilde{f}(\bsx^*))^\top(\bsx-\bsa)
&\ge-L_{f}\|\bsa-\bsx^*\|\|\bsb\|,\label{cdc18so:prop:equf3}\\
(\nabla \tilde{f}(\bsx)-\nabla \tilde{f}(\bsa))^\top(\bsa-\bsx^*)
&\ge-L_{f}\|\bsb\|\|\bsa-\bsx^*\|.\label{cdc18so:prop:equf4}
\end{align}
Hence, from \eqref{cdc18so:prop:equf1}--\eqref{cdc18so:prop:equf4}, we have
\begin{align}\label{cdc18so:prop:equf5}
&(\nabla \tilde{f}(\bsx)-\nabla \tilde{f}(\bsx^*))^\top(\bsx-\bsx^*)\nonumber\\
&=(\nabla \tilde{f}(\bsa)-\nabla \tilde{f}(\bsx^*))^\top(\bsa-\bsx^*)\nonumber\\
&~~~+(\nabla \tilde{f}(\bsx)-\nabla \tilde{f}(\bsa))^\top(\bsx-\bsa)\nonumber\\
&~~~+(\nabla \tilde{f}(\bsa)-\nabla \tilde{f}(\bsx^*))^\top(\bsx-\bsa)\nonumber\\
&~~~+(\nabla \tilde{f}(\bsx)-\nabla \tilde{f}(\bsa))^\top(\bsa-\bsx^*)\nonumber\\
&\ge \frac{\nu}{n}\|\bsa-\bsx^*\|^2-2L_{f}\|\bsa-\bsx^*\|\|\bsb\|-L_{f}\|\bsb\|^2\nonumber\\
&\ge \frac{\nu}{2n}\|\bsa-\bsx^*\|^2-(\frac{2nL_{f}^2}{\nu}+L_{f})\|\bsb\|^2.
\end{align}
From \eqref{cdc18so:lemma:LReq} and $\bsb\in\mathbb{H}^\bot$, we have
\begin{align}\label{cdc18so:prop:equf6}
&\bsx^\top\bsL\bsx=\bsb^\top\bsL\bsb\nonumber\\
&=\bsb^\top(R\Lambda_1 R^\top\otimes I_p)\bsb
\ge\rho_2(L)\bsb^\top(RR^\top\otimes I_p)\bsb\nonumber\\
&=\rho_2(L)\bsb^\top((\bsI_n-\frac{1}{n}{\bf 1}_n{\bf 1}^{\top}_n)\otimes I_p)\bsb=\rho_2(L)\|\bsb\|^2.
\end{align}
Hence, \eqref{cdc18so:prop:equf5} and \eqref{cdc18so:prop:equf6} yield \eqref{rsc-eq}.

\subsection{Proof of Theorem~\ref{thm2}}\label{proof-thm2}

Consider on the following functions
\begin{align}
V_1(\bm{x},\bm{v})=&\frac{1}{2}\|\bm{x}- \bm{x}^0 \|^2+\frac{1}{2}\|\bm{v} - \bm{v}^0\|^2_{R\Lambda^{-1}_1R^{\top}\otimes I_p},\label{func-V-q}\\
V_2(\bm{x},\bm{v})=&2\epsilon_1V_1(\bm{x},\bm{v})+\frac{\alpha}{2\beta}\|\bm{v}-\bm{v}^0\|^2,\label{lyap-q-eq3}\\
V_3(\bm{x},\bm{v})=&\bm{x}^\top(K_n\otimes \bsI_p)(\bm{v}-\bm{v}^0),\label{lyap-eq2}
\end{align}
where $R$ and $\Lambda$ are defined in Lemma~\ref{lemma-Xinlei},
$\bsx^0={\bf 1}_n\otimes x^0=\calP_{\bsX^*}(\bsx)$, and $\bm{v}^0=-\frac{1}{\beta}\nabla f (\bm{x}^0)$.

The derivative of $V_1(\bm{x},\bm{v})$ along the trajectories of \eqref{kia-algo-compact} satisfies
\begin{align}
&\dot{V}_1\nonumber\\
&=(\bm{x}-\bm{x}^0)^{\top}(-\alpha\bsL\bm{x}-\beta\bm{v}-\nabla \tilde{f}(\bm{x}))\notag \\
&~~~+\beta(\bm{v}-\bm{v}^0)^{\top}(K_n\otimes \bsI_p)(\bm{x}-\bm{x}^0)\nonumber\\
&=(\bm{x}-\bm{x}^0)^{\top}(-\alpha\bsL\bm{x}-\beta(\bm{v}-\bm{v}^0)\notag \\
&~~~-(\nabla \tilde{f}(\bm{x})-\nabla \tilde{f}(\bm{x}^0))+\beta(\bm{v}-\bm{v}^0)^{\top}(\bm{x}-\bm{x}^0)\nonumber\\
&=-\alpha\bm{x}^{\top}\bsL\bm{x}-(\bm{x}-\bm{x}^0)^{\top}(\nabla \tilde{f}(\bm{x})-\nabla \tilde{f}(\bm{x}^0))\nonumber\\
&\le-\nu_1\|\bm{x}-\bm{x}^0\|^2,\label{vdot-eq1-q}
\end{align}
where the first equality follows from Lemma~\ref{cdc18so:lemma:projection}, \eqref{lemma-eq}, and $(K_n\otimes \bsI_p)\bm{x}^0={\bf 0}_{np}$; the second equality follows from $\beta\bm{v}^0=-\nabla \tilde{f}(\bm{x}^0)$, $(\bm{x}^0)^{\top} \bsL ={\bf0}_{np}$, and $(K_n\otimes \bsI_p ) (\bm{v}-\bm{v}^0)=\bm{v}-\bm{v}^0$ since the facts that $\sum_{i=1}^{n} v_i(t)$ remains unchanged with respect to $t$ and the initial states satisfy $\sum_{i=1}^{n}v_i(0)={\bf0}_p$; and the inequality follows from \eqref{rsc-eq}.

Similarly, we know that the derivatives of $V_2$ and $V_3$ along the trajectories of \eqref{kia-algo-compact} satisfy
\begin{align}
&\dot{V}_2\nonumber\\
&\le-2\epsilon_1\nu_1 \|\bm{x}-\bm{x}^0\|^2+\alpha(\bm{v}-\bm{v}^0)^\top\bsL\bm{x}(t),\label{lyap-q-eq3dot}\\
&\dot{V}_3\nonumber\\
&=(\bm{v}-\bm{v}^0)^\top(K_n\otimes \bsI_p)(-\alpha\bsL\bm{x}-\beta\bm{v}-\nabla \tilde{f}(\bm{x}))\notag \\
&~~~+\beta(\bm{x}-\bm{x}^0)^\top\bsL\bm{x}\notag \\
&=(\bm{v}-\bm{v}^0)^\top(K_n\otimes \bsI_p)(-\alpha\bsL\bm{x}-\beta(\bm{v}-\bm{v}^0)\notag \\
&~~~-(\nabla \tilde{f}(\bm{x})-\nabla \tilde{f}(\bm{x}^0)))+\beta(\bm{x}-\bm{x}^0)^\top\bsL(\bm{x}-\bm{x}^0)\notag \\
&\le-\beta\|\bm{v}-\bm{v}^0\|^2-\alpha(\bm{v}-\bm{v}^0)^\top\bsL\bm{x}(t)+\rho(L)\beta\|\bm{x}-\bm{x}^0\|^2\notag\\
&~~~+\frac{\beta}{2}\|\bm{v}-\bm{v}^0\|^2\notag+\frac{1}{2\beta}\|\nabla\tilde{f}(\bm{x})-\nabla \tilde{f}(\bar{x}^0)\|^2\notag\\
&\le-\frac{\beta}{2}\|\bm{v}-\bm{v}^0\|^2-\alpha(\bm{v}-\bm{v}^0)^\top\bsL\bm{x}(t)\notag\\
&~~~+(\rho(L)\beta+\frac{L_{f}^2}{2\beta})\|\bm{x}-\bm{x}^0\|^2.\label{lyap-q-eq2dot}
\end{align}

Consider the following Lyapunov candidate
\begin{equation}\label{lyap-eq-augv2}
V(\bm{x},\bm{v})=V_2(\bm{x},\bm{v})+V_3(\bm{x},\bm{v}).
\end{equation}
From  \eqref{lyap-q-eq2dot}--\eqref{lyap-q-eq3dot}, we know that the derivative of $V$ along the trajectories of \eqref{kia-algo-compact} satisfies
\begin{align}
\dot{V} \leq &-\frac{\beta}{2}\|\bm{v}-\bm{v}^0\|^2-\epsilon_1\nu_1 \|\bm{x}-\bm{x}^0\|^2\notag\\
\le&\epsilon_2(\|\bm{v}-\bm{v}^0\|^2+ \|\bm{x}-\bm{x}^0\|^2).\label{lyap-q-v2v3}
\end{align}
From the Young's inequality, $(K_n\otimes \bsI_p )(\bm{v}-\bm{v}^0)=\bm{v}-\bm{v}^0$, $(\bm{x}^0)^{\top} (K_n\otimes \bsI_p)={\bf 0}_p$, and \eqref{lemma-eq2}, we have that
\begin{align}
&\epsilon_4(\|\bm{v}-\bm{v}^0\|^2+ \|\bm{x}-\bm{x}^0\|^2)\label{lyap-eq-v2low}\\
&\le V \le\epsilon_3(\|\bm{v}-\bm{v}^0\|^2+ \|\bm{x}-\bm{x}^0\|^2),\label{lyap-eq-v2up}
\end{align}
where $\epsilon_4=\epsilon_1\min\{\frac{1}{\rho(L)},\frac{1}{2}\}$.
Then, \eqref{lyap-q-v2v3} and \eqref{lyap-eq-v2up} yield
\begin{align}\label{lyap-q-v2v3v}
\dot{V}\leq -\frac{\epsilon_2}{\epsilon_3}V.
\end{align}
Thus, $V(t)\le V(0)e^{-\frac{\epsilon_2}{\epsilon_3}t}$. Noting that $\|\bm{x}-\bm{x}^0\|^2\le \frac{1}{\epsilon_4}V$, we know that $\bsx(t)$ exponentially converges to $\bsX^*$ with a rate no less than $\frac{\epsilon_2}{2\epsilon_3}>0$.

\subsection{Proof of Theorem~\ref{thm4}}\label{proof-thm4}

Denote
\begin{align*}
\bsF(\bsz)=\begin{bmatrix}
\alpha\bsL\bm{x}+\beta\bm{v}+\nabla \tilde{f}(\bm{x})\\
-\beta\bsL\bm{x}
\end{bmatrix},
\end{align*}
then we can rewrite \eqref{kia-algo-dc-compact} as
\begin{align}\label{kia-algo-dc-compactz}
\bsz(k+1)=\bsz(k)-h\bsF(\bsz(k)).
\end{align}

$V$ defined in \eqref{lyap-eq-augv2} is differentiable and its gradient is
\begin{align*}
\nabla V(\bsz)=\begin{bmatrix}
[\nabla V(\bsz)]_1\\
[\nabla V(\bsz)]_2
\end{bmatrix},
\end{align*}
where $[\nabla V(\bsz)]_1=2\epsilon_1(\bm{x}-\bsx^0)+(K_n\otimes \bsI_p)(\bsv-\bsv^0)$ and $[\nabla V(\bsz)]_2=(K_n\otimes \bsI_p)\bm{x}+(2\epsilon_1(R\Lambda^{-1}_1R^{\top}\otimes \bsI_p)+\frac{\alpha}{\beta}\otimes\bsI_{np})( \bm{v} - \bm{v}^0 )$.
Noting that the projection is nonexpansive as shown in Lemma~\ref{cdc18so:lemma:projection} and $\bsv^0$ is a constant vector, we know that $\nabla V(\bsz)$ is Lipschitz continuous with constant $\eta$. Then, from Lemma~\ref{cdc18so:lemma:lipschitz}, we have that
\begin{align}\label{cdc18so:v2ds}
&V(\bsz(k+1))-V(\bsz(k))\notag\\
&\le(\bsz(k+1)-\bsz(k))^\top \nabla V(\bsz(k))+\frac{\eta}{2}\|\bsz(k+1)-\bsz(k)\|^2\notag\\
&=-h\bsF^\top(\bsz(k))\nabla V(\bsz(k))+\frac{h^2\eta}{2}\|\bsF(\bsz(k))\|^2.
\end{align}
From \eqref{lyap-q-v2v3v}, we know that
\begin{align}\label{cdc18s0:v2ds1}
-\bsF^\top(\bsz(k))\nabla V(\bsz(k))\leq -\frac{\epsilon_2}{\epsilon_3}V(\bsz(k)).
\end{align}
From $\bsL \bm{x}^0={\bf 0}_{np}$ and $\bm{v}^0=-\nabla \tilde{f}(\bm{x}^0)$, we know that
\begin{align*}
\bsF(\bsz)=\begin{bmatrix}
\alpha\bsL(\bm{x}-\bm{x}^0)+\beta(\bm{v}-\bm{v}^0)+\nabla \tilde{f}(\bm{x})-\nabla \tilde{f}(\bm{x}^0)\\
-\beta\bsL(\bm{x}-\bm{x}^0)
\end{bmatrix}.
\end{align*}
Hence, from  Lemma~\ref{lemma-Xinlei}, Assumption~\ref{cdc18so:ass:fiu}, and \eqref{lyap-eq-v2low}, we have that
\begin{align}\label{cdc18s0:v2ds2}
&\|\bsF^\top(\bsz)\|^2\notag\\
&\le \beta^2\rho^2(L)\|\bm{x}-\bm{x}^0\|^2+3\alpha^2\rho^2(L)\|\bm{x}-\bm{x}^0\|^2\notag\\
&~~~+3\beta^2\|\bm{v}-\bm{v}^0\|^2+3\|\nabla \tilde{f}(\bm{x})-\nabla \tilde{f}(\bm{x}^0)\|^2\notag\\
&\le (\beta^2\rho^2(L)+3\alpha^2\rho^2(L)+3L_{f}^2)\|\bm{x}-\bm{x}^0\|^2\notag\\
&~~~+3\beta^2\|\bm{v}-\bm{v}^0\|^2\notag\\
&\le \epsilon_5(\|\bm{v}-\bm{v}^0+ \|\bm{x}-\bm{x}^0\|^2)\notag\\
&\le \frac{\epsilon_5}{\epsilon_4}V_1(\bsz).
\end{align}

Then, from \eqref{cdc18so:v2ds}--\eqref{cdc18s0:v2ds2}, we have
\begin{align}\label{cdc18so:v2ds3}
&V(\bsz(k+1))\notag\\
&\le V(\bsz(k))-\frac{h\epsilon_2}{\epsilon_3}V(\bsz(k))+\frac{h^2\eta\epsilon_5}{2\epsilon_4}V(\bsz(k))\notag\\
&=(1-\frac{h(2\epsilon_2\epsilon_4-h\eta\epsilon_3\epsilon_5)}{2\epsilon_3\epsilon_4})V(\bsz(k))\notag\\
&\le(1-\frac{h(2\epsilon_2\epsilon_4-h\eta\epsilon_3\epsilon_5)}{2\epsilon_3\epsilon_4})^{k+1}V(\bsz(0)).
\end{align}
Thus,
\begin{align*}
&\|\bsx(k)-\calP_{\bsX^*}(\bsx(k))\|\le\sqrt{\frac{1}{\epsilon_4}V(\bsz(k))}\notag\\
&\le(1-\frac{h(2\epsilon_2\epsilon_4-h\eta\epsilon_3\epsilon_5)}{4\epsilon_3\epsilon_4})^{k}
\sqrt{\frac{1}{\epsilon_4}V(\bsz(0))}.
\end{align*}
In other words, $\bsx(k)$ linearly converges to $\bsX^*$ with a rate no less than $1-\frac{h(2\epsilon_2\epsilon_4-h\eta\epsilon_3\epsilon_5)}{4\epsilon_3\epsilon_4}$.

\end{document}